\theoremstyle{plain}
\newtheorem{thm}{Theorem}[section]
\newtheorem{cor}[thm]{Corollary}
\newtheorem{lem}[thm]{Lemma}
\newtheorem{prop}[thm]{Proposition}
\newtheorem{defn}[thm]{Definition}
\newtheorem{exa}[thm]{Example}
\DeclarePairedDelimiterX{\inp}[2]{\langle}{\rangle}{#1, #2}
\begin{document}

\begin{center}
\fontsize{13pt}{10pt}\selectfont
    \textsc{\textbf{ Graded Prime Ideals Over Graded Near Ring}}
    \end{center}
\vspace{0.1cm}
\begin{center}
   \fontsize{12pt}{10pt}\selectfont
    \textsc{{\footnotesize    MaliK Bataineh, Tamem Al-shorman and Eman Al-Kilany  }}
\end{center}
\vspace{0.2cm}

\begin{abstract}
  In this paper, we consider graded near-rings over a monoid
$G$ as a generalizations of graded rings over groups. We introduce
certain innovative graded prime ideals and study some of its basic
properties over graded near-rings.  
\end{abstract}

\section{INTRODUCTION}
Near-rings are generalizations of rings: addition is not necessarily
abelian and only one distributive law holds. They arise in a natural
way in the study of mappings on groups:  the set $M(G)$ of all maps
of a group $(G,+)$ into itself endowed with pointwise addition and
composition of functions is a near-ring. For general background on
the theory of near-rings we refer the reader to the monographs
written by Pilz [5] and Meldrum [3]. In fact Pilz  defines it as : A
near-ring $(N,+,*)$ is a set $N$ with two
binary operations $+$ and $*$ that satisfy the following axioms:\\
$(1)$ $(N,+)$ is a group.\\
$(2)$ $(N,*)$ is semi group. (semi group: a set together with an
associative binary operation);\\
$(3)$ $*$ is right distributive over $+$ (i.e. $(a+b)*y=ay+by)$. \\
The graded rings were introduced by Nastasescu and Van Oystaeyen
[4]. Also graded near-rings were introduced and studied by Dumitru,
Nastasescu and Toader [1] in which they defined it as:\\
Let G be a multiplicatively monoid (an algebraic structure with a
single associative
 binary operation) with identity, a near-ring N is called a G-graded if there exists a family of
 additive normal subgroups $\{ N_{\sigma}\}$  of N satisfying that:
\begin{enumerate}
    \item  $ N =\ \oplus_ {\sigma \in G} N_{\sigma}  $ ; \item $N_{\sigma}N_{\tau} \subseteq N_{\sigma \tau} , \forall \sigma , \tau \in G.$
\end{enumerate}.
Graded prime ideals over graded rings have been introduced and
defined as:\\
Let $R$ be a commutative $G$-graded ring where $G$ is an abelian
group,
       a graded proper ideal $I$ of $R$ is called  graded prime ideal if
       $ a_gb_h  \in  I_{gh} \ $, then  $a_g \in I_g \ or\ b_h \in
       I_h$. (see [2]).\\ 

\section{GRADED PRIME IDEALS }
In this Section, we present the concept of graded prime ideals,
study some of its properties in graded near rings and in special
graded near-rings such as graded near-rings with zero ideal is
graded prime and graded near-rings in which every
   graded ideal is graded prime. Also, determine the shape of graded prime ideal in the graded quotient near-ring and in the
     product of two $G$-graded near-rings.
\begin{defn} Let G be a multiplicatively monoid with identity element and
$N$
  be a $G$-graded near-ring.  A proper graded  ideal $P$ is called graded  prime
  ideal if whenever $ A_gB_h \subseteq P_{gh} \ $, then \ either\ $A_g \subseteq  P_g\ $ or $\ B_h \subseteq  P_h$,
   for any ideals $A$ and $B$ in $N$.
\end{defn}

\begin{exa} Let $N$ be the near-ring $(\mathbb{Z}_0[x],+,o )$ the set of all polynomial with integer coefficients
  and zero constant term, where $+$ is the usual addition and $o$ is the composition of functions.
  Let $G = (\mathbb{N}^*,*) $ the set of natural number without zero and $*$ is usual multiplication.
  $N$ is $G$-graded near-ring where $(\mathbb{Z}_0[x])_n\ = \ \mathbb{Z}X^n.$  (See [4]).
Consider the ideal $P$, the set of all polynomials in
$\mathbb{Z}_0[x] $ with even coefficient. Let $A$ and $B$ be two
ideals in $\mathbb{Z}_0[x]$  such that $A_gB_h \subseteq P_{gh}$
where $g, h \in N^* .$ Suppose that  $A_g \nsubseteq P_g $ and\ $B_h
\nsubseteq P_h$. Then there exists $a \in A_g \ $ and $\ b \in B_h$
such that neither $a$ nor $b$ has even coefficient. So $ab$ does not
have even coefficient, contradiction.
 So $A_g \subseteq  P_g\ $ or $\ B_h \subseteq P_h $ and hence $P$ is graded prime
\end{exa}
More general examples of graded prime ideals are the maximal ideal
in near-ring with multiplicative identity and the intersection of
totally ordered graded prime ideals by inclusion.

\begin{prop}
Let $N$ be a graded near-ring with multiplicative identity.
  If $P$ is a proper graded ideal does not properly contained in any proper ideal, then $P$ is  graded prime.
\end{prop}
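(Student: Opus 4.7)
I would argue the contrapositive. Assume $A_g B_h \subseteq P_{gh}$ and $A_g \not\subseteq P_g$; the aim is to conclude $B_h \subseteq P_h$.

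First, because $P$ is graded one has $P \cap N_g = P_g$, so $A_g \not\subseteq P_g$ forces $A_g \not\subseteq P$ and in particular $A \not\subseteq P$. The sum $A + P$ is then an ideal of $N$ strictly containing $P$, and the maximality hypothesis on $P$ forces $A + P = N$. Since $N$ has a multiplicative identity, there exist $a \in A$ and $p \in P$ with $1 = a + p$.

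Second, for an arbitrary $b \in B_h$, the right distributivity available in the near-ring yields
\[
b \;=\; 1\cdot b \;=\; (a+p)\,b \;=\; ab + pb ,
\]
and $pb \in P$ because $P$ is an ideal (so $PN \subseteq P$). If one can also show $ab \in P$, then $b \in P$, and since $b \in N_h$ and $P$ is graded, $b \in P \cap N_h = P_h$; as $b \in B_h$ was arbitrary this gives $B_h \subseteq P_h$ and closes the argument.

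The main obstacle is establishing $ab \in P$. The hypothesis $A_g B_h \subseteq P_{gh}$ controls only the $g$-graded part of $a$: writing $a = \sum_\sigma a_\sigma$ in graded components (valid because $A$ is graded) gives $a_g b \in A_g B_h \subseteq P_{gh} \subseteq P$ for free, but the other summands $a_\sigma b$ with $\sigma \neq g$ are not automatically in $P$. I would close this gap by refining the first step, replacing $A$ with the ideal $\langle \alpha \rangle$ generated by a single homogeneous element $\alpha \in A_g \setminus P_g$. Since $\langle \alpha \rangle \not\subseteq P$, maximality again forces $\langle \alpha \rangle + P = N$, so the representative of $1$ lies in the principal ideal of the homogeneous element $\alpha$; using right distributivity, associativity, and the near-ring ideal axioms one then aims to show $\langle \alpha \rangle \cdot b \subseteq A_g B_h + P \subseteq P$. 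Verifying this last containment for the principal ideal of a homogeneous element in a graded near-ring, where the generators of an ideal are far less transparent than in the commutative ring case, is the delicate technical step of the proof.
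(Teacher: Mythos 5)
Your reduction to a homogeneous generator $\alpha \in A_g \setminus P_g$ is a sensible instinct, but the step you defer --- establishing $\langle \alpha \rangle \cdot b \subseteq P$ --- is a genuine gap rather than a routine verification, and it is exactly where this one-sided strategy breaks down. The hypothesis $A_g B_h \subseteq P_{gh}$ controls only products of degree-$g$ elements of $A$ with degree-$h$ elements of $B$; it hands you $\alpha b \in P$ and nothing more. The ideal $\langle \alpha \rangle$, however, is much larger than anything of the form $A_g + P$: it contains $\alpha n$ for every $n \in N$ of arbitrary degree, the normal closure of $\alpha$ in $(N,+)$, and elements of the form $n(n'+\alpha)-nn'$. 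For instance $(\alpha n)b = \alpha(nb)$, and $nb$ need not lie in $B_h$ --- in a general near-ring it need not even lie in $B$, since $B$ being an ideal gives $BN \subseteq B$ but not $NB \subseteq B$ --- so the hypothesis says nothing about such products. The containment $\langle \alpha \rangle b \subseteq A_g B_h + P$ that your plan requires is therefore unsupported, and I do not see a way to recover it along this route.

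The paper sidesteps the obstacle by arguing symmetrically instead of by the contrapositive on one factor: it assumes both $A_g \not\subseteq P_g$ and $B_h \not\subseteq P_h$, picks homogeneous $a \in A_g \setminus P_g$ and $b \in B_h \setminus P_h$, uses the maximality hypothesis to write $1 = p + an$ and $1 = q + bm$ with $p, q \in P$, and then combines the two near-ring ideal axioms with the single fact $ab \in A_g B_h \subseteq P_{gh} \subseteq P$ to expand $1 = (p+an)(q+bm)$ into a sum of terms each lying in $P$, forcing $1 \in P$ and contradicting properness. The only product the hypothesis is ever asked to control is $ab$ for the two chosen homogeneous elements, which is precisely what it does control; no containment of $\langle \alpha \rangle b$ is ever needed. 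To salvage your plan you would have to import that idea and exploit the failure on the $B$ side as well, rather than trying to push all of $\langle \alpha \rangle$ through the product against $b$.
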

\begin{proof}
Let $A$ and $B$ be ideals such that $A_gB_h \subseteq\ P_{gh} $.
Suppose that neither $ A_g\ \subseteq\ P_g\ $ nor
 $ B_h\ \subseteq\ P_h $, then there exists $a \in\ A_g ,\ b \in B_h\ $ such\ that\  $a \notin P_g ,\ b \notin P_h$.
  P is subset of both $P\ +\ <a>\ $ and\ $ P\ +\ <b>$. So, by assumption, $N\ =\ P\ +\ <a>\ =\ P\ +\ <b> . $
  Thus, $\ 1 \ =\  p\ +\ an\ $ and $\ 1\ =\ q\ +\ bm\ $ where $\ n,\ m \ \in\ N\ $ and\ $p,\ q\ \in\ P .\ $
  But $P$\ is\ an\ ideal, so $\ a (q\ +\ bm)\ -\ abm\ \in\ P  $ and\ since $\ 1\ =\ (q\ +\ bm),$
  then $\ (q\ +\ bm) a\ -\ abm\ \in\ P.\ $Thus, $\ q\ +\ bma\ -\ abm\ \in\ P.\ $
  Therefore $,\ bma\ \in\ P$ since $\ qa,\ abm\ \in\ P.\ $
  But, $ 1=\ 1\ *\ 1\ =\ (p\ +\ an)(q\ +\ bm)\ =\ p(q\ +\ bm)\ +\ an(q\ +\ bm)\ =\ p(q\ +\ bm)\ +\ (q\ +\ bm)an ,$
   and since $\ q\ +\ bm\ =\ 1.\ $ Then,\ $1\ =\ p(q\ +\ bm)\ +\ qan\ +\ bman\ \in\ P.\ $ Hence, $\ N\ =\ P,\ $
   contradiction.\ So,  $\ A_g\ \subseteq\ P_g  \ $ or\ $B_h\ \subseteq\ P_h.$ \ Hence, $P$ is\ graded \ prime\ ideal.
\end{proof}
\begin{prop}
Let $N$ be a $G$-graded near-ring.  Let $A$ be a totally ordered
set.
 Let $(P_a)_{a \in A}$ be a family of graded Prime ideals with $P_a \subseteq P_b$ for
 any $a,\ b \in A$ with $a \leqslant b$. Then $P = \bigcap\limits_{a \in A}P_a$ is a graded prime ideal.
\end{prop}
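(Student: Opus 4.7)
The plan is to first verify that $P = \bigcap_{a \in A} P_a$ is a proper graded ideal, and then reduce graded primality of $P$ to graded primality of the individual $P_a$ by a dichotomy argument that uses the total ordering.

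First I would check that $P$ is graded: an arbitrary intersection of graded ideals is graded, since if $x = \sum_\sigma x_\sigma \in P$, then $x \in P_a$ for every $a$, and each $P_a$ being graded forces $x_\sigma \in P_a$ for every $a$, hence $x_\sigma \in P$. In particular $P_\sigma = \bigcap_{a \in A} (P_a)_\sigma$. Properness is immediate because $P \subseteq P_{a_0}$ for any fixed $a_0$, and $P_{a_0}$ is proper.

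For the main point, suppose $A$ and $B$ are ideals of $N$ with $A_g B_h \subseteq P_{gh}$. Since $P_{gh} \subseteq (P_a)_{gh}$ for every $a \in A$, graded primality of each $P_a$ gives either $A_g \subseteq (P_a)_g$ or $B_h \subseteq (P_a)_h$. I would then partition the index set by defining
\[
S = \{a \in A : A_g \subseteq (P_a)_g\}, \qquad T = \{a \in A : B_h \subseteq (P_a)_h\},
\]
so that $S \cup T = A$. If $S = A$, then $A_g \subseteq \bigcap_a (P_a)_g = P_g$ and we are done; symmetrically if $T = A$.

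The only remaining case, and the one where the totally ordered hypothesis enters, is when neither $S$ nor $T$ exhausts $A$, so one can pick $s \in S \setminus T$ and $t \in T \setminus S$. By total order either $s \leqslant t$ or $t \leqslant s$; in the first subcase $P_s \subseteq P_t$ forces $A_g \subseteq (P_s)_g \subseteq (P_t)_g$, contradicting $t \notin S$, and in the second subcase $P_t \subseteq P_s$ forces $B_h \subseteq (P_t)_h \subseteq (P_s)_h$, contradicting $s \notin T$. This contradiction rules out the case and completes the proof. The main obstacle is purely organizational: making sure the decomposition $S \cup T = A$ is exploited before one tries to pick a single offending element, since a naive attempt to find $a \in A_g \setminus P_g$ and $b \in B_h \setminus P_h$ does not immediately land them inside a common $P_a$.
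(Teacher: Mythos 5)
Your proof is correct and takes essentially the same approach as the paper's: both reduce to the dichotomy forced by graded primality of each $P_a$ and then use the total order to kill the mixed case (the paper fixes a single index $a$ with $I_g \nsubseteq (P_a)_g$ and compares every other index to it, while you pick one offending index from each side of the dichotomy and compare those two directly, which is a slightly cleaner symmetric packaging; you also check gradedness and properness of the intersection, which the paper omits). One cosmetic remark: you reuse the letter $A$ both for the totally ordered index set and for one of the ideals in the primality test, so rename one of them.
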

\begin{proof}
Let $I$ and $J$ be ideals of $N$ with $I_gJ_h \subseteq
(\bigcap\limits_{a \in A} P_a)_{gh}$.
  Then $\forall a \in A$, we have $ I_gJ_h \subseteq (P_a)_{gh}$. If  $\exists a \in A $
  such that $I_g \nsubseteq (P_a)_g$, then $J_h \subseteq (P_a)_h $. Hence, $\forall\ b \ \geqslant\  a$,
  we have $ J_h\ \subseteq\ (P_b)_h$. If $\exists\ c\ <\ a$ such that $J_h \ \nsubseteq\ (P_c)_h$,
   then $I_g\ \subseteq\ (P_c)_g$. So, $I_g \ \subseteq\ (P_a)_g$. A contradiction.
   Hence, $\forall a \in A $ we have $J_h\ \subseteq \ (P_a)$, therefore, $J_h\ \subseteq\ \bigcap\limits_{a \in A} (P_a)_h$.
\end{proof}
Proposition 2.4,
 states that if $(P_a)_{a \in A}$ is a family of
graded prime ideals with
   $P_a \subseteq P_b$ for $a,b \in A$ with $a \leqslant b$, where $A$ is totally ordered set.
    Then $P =\ \bigcap\limits_{a \in A}P_a$ is  graded prime. In general, it is not necessary
    that the intersection is graded prime   if we drop  the condition  ($A$ is totally ordered set
    and $P_a \subseteq P_b$ for $a,\ b \in A$ with $a \leqslant b$). For example,  consider the near-ring
    ($\mathbb{Z}_{6},+,*)\  $ with\ $(G={0,1},+)\ $ where\ $+$ \ defined\ as\ $0+0=0,\ 0+1=1,\ 1+0=1\ $
    and\ $1+1=1$. Then $N$ is $G$-graded near-ring defined by $N_0=\mathbb{Z}_{6},\ N_1\ = \{0\}$.
Note that the intersection of $P_1=\{0,\ 2,\ 4\}$ and $ P_2=\{0,\
3\}\ $ is not graded prime ideal, although $P_1 $ and $P_2$ are.
However, the next two theorems give some properties of any
intersection of graded prime ideals.
\\
\\
We denoted to the $I_g I_g\ ...\ I_g$ n-times by $(I_g)^n. $

\begin{thm} Let $N$ be a  graded near-ring. Let $P$ be an intersection of graded prime ideals.
   For any ideal J with $(J_g)^n  \subseteq P_{g^n} $, for some n $\in \mathbb{N}$ we have $J_g  \subseteq P_g $.
\end{thm}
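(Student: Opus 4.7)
The plan is to reduce to a single graded prime factor and then proceed by induction on the exponent $n$. Since $P = \bigcap_{a \in A} P_a$ with each $P_a$ graded prime, the containment $J_g \subseteq P_g$ is equivalent to $J_g \subseteq (P_a)_g$ for every $a \in A$. So I fix one such graded prime and write $Q = P_a$ for brevity; the hypothesis then reads $(J_g)^n \subseteq Q_{g^n}$ and my goal becomes $J_g \subseteq Q_g$.

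For the base case $n = 1$ there is nothing to prove. For $n = 2$, the hypothesis $J_g J_g \subseteq Q_{g \cdot g}$ is precisely the premise of Definition 2.1 applied to the ideals $A = B = J$ with both indices equal to $g$, so graded primeness of $Q$ immediately yields $J_g \subseteq Q_g$.

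For the inductive step with $n \geq 3$, I would factor $(J_g)^n = J_g \cdot (J_g)^{n-1}$ and try to apply Definition 2.1 with $A = J$ (whose $g$-component is $J_g$) and an ideal $B$ whose $g^{n-1}$-component realises $(J_g)^{n-1}$. The resulting dichotomy is: either $J_g \subseteq Q_g$ and we are done, or $(J_g)^{n-1} \subseteq Q_{g^{n-1}}$, in which case the inductive hypothesis applied to $J$ with exponent $n-1$ again delivers $J_g \subseteq Q_g$.

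The main obstacle is producing the ideal $B$ correctly. Definition 2.1 requires both sides of the premise $A_g B_h \subseteq P_{gh}$ to be components of honest ideals, whereas $(J_g)^{n-1}$ is a priori merely a subset of $N_{g^{n-1}}$. The natural candidate is the graded ideal generated by $(J_g)^{n-1}$ (which is graded because its generators are homogeneous of degree $g^{n-1}$), but one must verify that the needed inclusion $J_g \cdot B_{g^{n-1}} \subseteq Q_{g^n}$ survives this enlargement, leaning on the fact that $Q$ is itself an ideal and hence absorbs left and right multiplications by homogeneous elements of $N$. This technical step --- turning the raw set-theoretic product $(J_g)^{n-1}$ into a component of a genuine ideal without losing the containment into $Q_{g^n}$ --- is the crux of the argument.
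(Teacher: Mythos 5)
Your overall strategy --- reduce to a single graded prime $Q=P_a$ via $P_g=\bigcap_a (P_a)_g$ and then peel off one factor of $J_g$ at a time using graded primeness --- is exactly the descent that the paper runs, and your base cases $n=1,2$ are fine: for $n=2$ the hypothesis $J_gJ_g\subseteq Q_{g\cdot g}$ is literally the premise of Definition 2.1 with $A=B=J$. But for $n\ge 3$ you have not proved the theorem: you have only named the missing step. Definition 2.1 applies to $A_gB_h$ where $A$ and $B$ are ideals, and $(J_g)^{n-1}$ is just a subset of $N_{g^{n-1}}$, not visibly the $g^{n-1}$-component of any ideal. Your proposed repair --- take $B=\langle (J_g)^{n-1}\rangle$ and check that $J_g\cdot B_{g^{n-1}}\subseteq Q_{g^n}$ survives the enlargement --- runs into a genuine obstruction in a near-ring: only the right distributive law holds, so from $a\cdot s\in Q_{g^n}$ for each generator $s\in (J_g)^{n-1}$ you cannot conclude $a\cdot(s_1+s_2)\in Q_{g^n}$, let alone control the conjugates and absorption terms $n(s+m)-nm$ that enter the generated ideal. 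The appeal to ``$Q$ absorbs multiplication'' does not help, because the elements of $B_{g^{n-1}}$ you are multiplying are not in $Q$. So the crux you flag is not a routine verification; as written, the inductive step does not go through.

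For what it is worth, the paper's own proof elides the same difficulty in a different way: it silently replaces $(J_g)^{n-1}$ by $J_{g^{n-1}}$, the honest $g^{n-1}$-component of the ideal $J$, and applies Definition 2.1 to $J_g\,J_{g^{n-1}}\subseteq (P_\alpha)_{g^n}$. That makes $B$ an ideal, but it needs the containment $J_g\,J_{g^{n-1}}\subseteq (P_\alpha)_{g^n}$, which is strictly stronger than the hypothesis $J_g\,(J_g)^{n-1}\subseteq (P_\alpha)_{g^n}$ since $(J_g)^{n-1}\subseteq J_{g^{n-1}}$ is an inclusion in the unhelpful direction. So neither your route nor the paper's closes the $n\ge 3$ case without an additional argument; you at least locate the problem honestly, but locating it is not the same as solving it.
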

\begin{proof}
Let $N$ be a graded near-ring and let $P_{\alpha's}$ be a set of
graded prime ideals in $N$.
    Let $P$ be the intersection of $P_{\alpha's}$. Let $I$ be an ideal such that $(I_g)^n \subseteq P_{g^n}$.
     Then $(I_g)^n $ is subset of each $(P_{\alpha})_{g^n} $. Since each $P_{\alpha} $ is graded prime,
     we have $I_g$  is subset of each $(P_{\alpha})_g$ or $I_{g^{n-1}}$  is subset of each $(P_{\alpha})_{g^{n-1}}$.
      If  $I_{g^{n-1}}$  is subset of each $(P_{\alpha})_{g^{n-1}}$, then we have $I_g$  is subset
      of each $(P_{\alpha})_g$ or $I_{g^{n-2}}$  is subset of each $(P_{\alpha})_{g^{n-2}}$. Consequently,
      we guarantee that   $I_g$  is subset of each $(P_{\alpha})_g$. Hence  $I_g$  is subset of
      their intersection which means $I_g$  is subset of $P_g$.
\end{proof}

\begin{thm} Let $N$ be a  graded near-ring. Let $P$ be an intersection of graded prime ideals.
       For any ideal $J$ with $J^n  \subseteq P $ for some $n \in \mathbb{N} $, we have $J  \subseteq P $.
\end{thm}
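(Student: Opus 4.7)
The plan is to reduce the theorem to Theorem~2.5 by working one graded component at a time. Since the hypothesis $J^n \subseteq P$ must be combined with graded information, I will treat $J$ as a graded ideal (this is implicit in the parallel phrasing with Theorem~2.5, where the notation $J_g$ already presumes $J$ is graded). It then suffices to prove $J_g \subseteq P_g$ for every $g \in G$, because summing over $g$ yields $J = \bigoplus_{g \in G} J_g \subseteq \bigoplus_{g \in G} P_g = P$.

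The first step is to show that the global containment $J^n \subseteq P$ descends to each graded piece as $(J_g)^n \subseteq P_{g^n}$. This uses two grading facts: multiplication respects the grading, so any product $a_1 a_2 \cdots a_n$ with each $a_i \in J_g$ lands in $N_{g^n}$; and $P$ is graded, so $P \cap N_{g^n} = P_{g^n}$. Combining $(J_g)^n \subseteq J^n \subseteq P$ with $(J_g)^n \subseteq N_{g^n}$ gives
\[
(J_g)^n \;\subseteq\; P \cap N_{g^n} \;=\; P_{g^n},
\]
valid for every $g \in G$ with the same exponent $n$ that appears in the hypothesis.

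The second step is a direct appeal to the previous theorem. Since $P$ is an intersection of graded prime ideals and $(J_g)^n \subseteq P_{g^n}$, Theorem~2.5 yields $J_g \subseteq P_g$. As $g \in G$ was arbitrary, passing to the direct sum finishes the argument.

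The main obstacle, as far as I can see, is purely bookkeeping: confirming that $J$ may be taken graded and that the grading really does split the containment $J^n \subseteq P$ cleanly into the homogeneous containments $(J_g)^n \subseteq P_{g^n}$. Once that split is justified, the theorem is essentially a corollary of Theorem~2.5.
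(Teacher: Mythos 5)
Your proposal is correct and follows essentially the same route as the paper: the paper's proof is precisely the chain $(J_g)^n \subseteq J^n \cap N_{g^n} \subseteq P_{g^n}$ followed by an appeal to Theorem~2.5 to get $J_g \subseteq P_g$ for each $g$, and then summing over $g$. Your version merely spells out the bookkeeping (why the product lands in $N_{g^n}$, why $P \cap N_{g^n} = P_{g^n}$, and the implicit assumption that $J$ is graded so that $J = \bigoplus_g J_g$) that the paper leaves tacit.
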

\begin{proof}
Let $P$ be the intersection of graded prime ideals and let $J$ be an
ideal with $J^n \ \subseteq\ P $.
    We have $(J_g )^n \ \subseteq \ [J^n \cap N_{g^n}] \ \subseteq\ P_{g^n} $. By Theorem 2.5,
    we have for each $ g \ \in \ G$,  $J_g \ \subseteq \ P_g $. Therefore, $J \ \subseteq\ P. $
\end{proof}

Note that, if $P$ is a prime ideal and P is graded ideal, then it is
graded prime ideal. (To see that, take two ideals $A$ and $B$ such
that $A_g B_h \subseteq P_{gh} $, then
  $A_g B_h \subseteq P $.  Let $C$ be the ideal generated by $ A_g$, $D$ be the ideal generated by
  $ B_h$, and $E$ be the ideal generated by $A_g B_h $. By the definition of the ideal generated by set,
   we have $CD \subseteq E \subseteq P $. Since $P$ is prime ideal, then $C \subseteq P $ or  $D \subseteq P $.
   Therefore, $A_g \subseteq P_g $ or  $B_g \subseteq P_g $).

Next Example shows that it is not necessary for graded prime ideal
to be prime ideal.

\begin{exa}
Let $N$ be $\mathbb{Z}[i]$ with usual addition and multiplication.
$N$ is $Z_2$-graded with  $N_0= \mathbb{Z} $ and   $N_1 =\mathbb{Z}
i $. Since $<1+i><1-i> \subseteq $
 $2N$  with neither $(1+i)$ nor $(1-i$)
belongs to $2N$,  $P = 2N$ is not prime. However, $P$ is graded
prime.

\end{exa}

Next Theorem gives an equivalent conditions for a graded ideal to be
graded prime ideal.
\begin{thm}
Let $N$ be a near-ring and $P$ be an ideal of $N$. Then the
following are equivalent:
 \begin{enumerate}
     \item For any two homogeneous elements $i$ and $j$ with $i \notin P_g$ and $ j \notin P_h$ then $<i>_g\ <j>_h\ \nsubseteq\ P_{gh}$;
     \item  For all ideals $I, J$ with $P_g \subset I_g  $ and $P_h \subset J_h $ then $I_gJ_h \nsubseteq  P_{gh}$;
     \item For all ideals $I, J$ with  $I_g \nsubseteq P_g  $ and $J_h \nsubseteq P_h $ then $I_gJ_h \nsubseteq  P_{gh}$;
     \item $P$ is graded prime ideal.
 \end{enumerate}
\end{thm}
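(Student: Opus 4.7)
The plan is a cyclic proof $(4) \Rightarrow (3) \Rightarrow (2) \Rightarrow (1) \Rightarrow (4)$. The implication $(4) \Rightarrow (3)$ is just the contrapositive of Definition~2.1: if ideals $I, J$ satisfy $I_g \nsubseteq P_g$ and $J_h \nsubseteq P_h$, then graded primeness forbids $I_g J_h \subseteq P_{gh}$. For $(3) \Rightarrow (2)$, strict inclusions $P_g \subsetneq I_g$ and $P_h \subsetneq J_h$ immediately force $I_g \nsubseteq P_g$ and $J_h \nsubseteq P_h$, so (3) applies directly.

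For $(1) \Rightarrow (4)$, I would argue by contradiction: suppose ideals $A, B$ satisfy $A_g B_h \subseteq P_{gh}$ while $A_g \nsubseteq P_g$ and $B_h \nsubseteq P_h$, and pick homogeneous witnesses $i \in A_g \setminus P_g$ and $j \in B_h \setminus P_h$. Since $A, B$ are ideals containing $i, j$, the generated ideals satisfy $\langle i \rangle \subseteq A$ and $\langle j \rangle \subseteq B$, hence $\langle i \rangle_g \langle j \rangle_h \subseteq A_g B_h \subseteq P_{gh}$, contradicting (1).

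The real content lies in $(2) \Rightarrow (1)$. Given homogeneous $i \in N_g$ and $j \in N_h$ with $i \notin P_g$, $j \notin P_h$, I would form the graded ideals $I = \langle i \rangle + P$ and $J = \langle j \rangle + P$ (graded because $P$ is, and because the ideal generated by a homogeneous element is graded). The strict inclusions $P_g \subsetneq I_g = \langle i \rangle_g + P_g$ and $P_h \subsetneq J_h = \langle j \rangle_h + P_h$ hold since $i \in I_g \setminus P_g$ and $j \in J_h \setminus P_h$. Condition~(2) then yields elements $x = \alpha + \rho \in I_g$ and $y = \beta + \sigma \in J_h$, with $\alpha \in \langle i \rangle_g$, $\beta \in \langle j \rangle_h$, $\rho \in P_g$, $\sigma \in P_h$, such that $xy \notin P_{gh}$. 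Right distributivity gives
\[
xy \;=\; \alpha(\beta+\sigma) + \rho(\beta+\sigma),
\]
and I claim that $xy$ and $\alpha\beta$ lie in the same coset of $P_{gh}$: the term $\rho(\beta+\sigma)$ lies in $PN \cap N_{gh} \subseteq P_{gh}$, while $\alpha(\beta+\sigma) - \alpha\beta \in P_{gh}$ by the near-ring ideal axiom $n(n'+p) - nn' \in P$ (applied with $p = \sigma \in P$) together with homogeneity. Consequently $xy \notin P_{gh}$ forces $\alpha\beta \notin P_{gh}$, and since $\alpha\beta \in \langle i \rangle_g \langle j \rangle_h$ we obtain $\langle i \rangle_g \langle j \rangle_h \nsubseteq P_{gh}$, which is (1).

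The main obstacle will be precisely this absorption step in $(2) \Rightarrow (1)$: because left distributivity fails in a near-ring, one cannot expand $(\alpha+\rho)(\beta+\sigma)$ naively into four terms, and the $\sigma$-piece has to be swallowed into $P_{gh}$ through the second near-ring ideal axiom (the substitute for $NI \subseteq I$) rather than by direct distribution. Everything else is routine bookkeeping with the grading, provided one records at the outset that the ideal generated by a homogeneous element is graded.
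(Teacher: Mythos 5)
Your proof is correct and, although you run the cycle of implications in the opposite order from the paper ($(4)\Rightarrow(3)\Rightarrow(2)\Rightarrow(1)\Rightarrow(4)$ versus the paper's $(1)\Rightarrow(2)\Rightarrow(3)\Rightarrow(4)\Rightarrow(1)$), the substance is identical: the only nontrivial step in either direction is applying condition (2) to the ideals $\langle i\rangle + P$ and $\langle j\rangle + P$ and reducing the product $(\alpha+\rho)(\beta+\sigma)$ to $\alpha\beta$ modulo $P_{gh}$ via right distributivity, $PN\subseteq P$, and the axiom $n(n'+p)-nn'\in P$. The paper performs exactly this computation in its $(2)\Rightarrow(3)$ step, so your proposal is essentially the paper's argument read backwards.
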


\begin{proof}
$(1) \Rightarrow (2) :\ $ If $P_g \subset I_g $ and $P_h \subset J_h
$.
 Take $i \in I_g$  and $j \in J_h $ with $i \notin P_g $ and $j \notin P_h$.
 Then $<i>_g<j>_h \nsubseteq P_{gh} $. Hence $I_gJ_h$ $\nsubseteq P_{gh}$.
 $(2) \Rightarrow (3):\ $ If $I_g \nsubseteq P_g $ and $J_h \nsubseteq  P_h$.
 Take $i \in I_g$  and $j \in J_h $ with $i \notin P_g $ and $j \notin P_h $.
 Then $(<i>_g\ +\ P_g)$ $(<j>_h\ +\ P_h)$ $\nsubseteq\  P_{gh}$.
 Therefore, $\exists  i_1\ \in\ <i>_g ,\ j_1\ \in\ <j>_h$,  $p_1\ \in\ P_g$ and
 $p_2\ \in\ P_h $ such that: $(i_1\ +\ p_1)(j_1\ +\ p_2)\ \notin\ P_{gh}$.
 So, $i_1 (j_1\ +\ p_2)\ -\ i_1j_1 \ +\ i_1j_1\ +\ p_1(j_1\ +\ p_2)\ \nsubseteq\ P_{gh}$.
 So, since $i_1 (j_1\ +\ P_2)\ -\ i_1j_1\ \in\ P_{gh}$ and $i_1 j_1\ \notin\ P_g$  we get $I_gJ_h\ \nsubseteq\
 P_{gh}$.
\\
(3) $ \Rightarrow$ (4) :\ Follows directly from the definition of
graded prime ideal.
\\
 (4) $ \Rightarrow$ (1) :\ If  $<i>_g\ <j>_h\
\subseteq\ P_{gh}$,
  then $<i>_g\ \subseteq\ P_g $ or  $<j>_h\ \subseteq\ P_h $. Therefore, $i  \in P_g$ or $j \in P_h$.

\end{proof}

Next proposition gives another equivalent condition that guarantees
a graded ideal to be graded prime.

\begin{prop}
Let $P$ be a graded prime ideal of a graded near-ring $N$. Then the
following are equivalent:
\begin{enumerate}
 \item \textnormal{For $a \in N_h\ $ and\ $b, c \in N_g,\ $
 with\ $ a (< b >_g+< c >_g)\ \subseteq\ P_{hg}$,\ we\ have\ a $\in P_h,$\ or\ $b$\ and $ c\in  P_g$}.
\item  \textnormal{For\ $x \in N\ $ but\ $x_g \notin P_g,\ $ we\ have\ $(P_{hg} : < x >_g +< y >_g)_h = P_h$ for\ any\ $y\in N_g.$}

\item \textnormal{P is graded  prime}.
\end{enumerate}
\end{prop}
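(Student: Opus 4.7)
The plan is to prove the cycle $(3) \Rightarrow (1) \Rightarrow (2) \Rightarrow (3)$; since $(3)$ coincides with the standing hypothesis, the real content is showing that $(1)$ and $(2)$ are alternative reformulations of graded primeness, and the cyclic structure makes the role of each explicit.

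For $(2) \Rightarrow (3)$, given ideals $I, J$ with $J_h I_g \subseteq P_{hg}$ and $I_g \nsubseteq P_g$, I would pick $x \in I_g \setminus P_g$. Since $x$ is homogeneous of degree $g$, we have $x = x_g \notin P_g$, so $(2)$ applies; setting $y = x$ gives $(P_{hg} : \langle x \rangle_g)_h = P_h$. Then for any $a \in J_h$, the chain $a \langle x \rangle_g \subseteq a I_g \subseteq J_h I_g \subseteq P_{hg}$ forces $a \in P_h$, so $J_h \subseteq P_h$, which establishes graded primeness.

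For $(1) \Rightarrow (2)$, the inclusion $P_h \subseteq (P_{hg} : \langle x \rangle_g + \langle y \rangle_g)_h$ is routine from the ideal property of $P$. For the reverse inclusion, given $a \in N_h$ with $a(\langle x \rangle_g + \langle y \rangle_g) \subseteq P_{hg}$, I would pass to homogeneous representatives: since $x_g \in \langle x \rangle$ we have $\langle x_g \rangle \subseteq \langle x \rangle$, hence $\langle x_g \rangle_g \subseteq \langle x \rangle_g$, so $a(\langle x_g \rangle_g + \langle y \rangle_g) \subseteq P_{hg}$. Applying $(1)$ with $b = x_g$ and $c = y$ yields $a \in P_h$ or $x_g, y \in P_g$, and the hypothesis $x_g \notin P_g$ forces $a \in P_h$.

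For $(3) \Rightarrow (1)$, assume $a(\langle b \rangle_g + \langle c \rangle_g) \subseteq P_{hg}$ with $a \notin P_h$. Choosing $0$ from one summand at a time gives $a \langle b \rangle_g \subseteq P_{hg}$ and $a \langle c \rangle_g \subseteq P_{hg}$. The idea is then to lift these element-level inclusions to $\langle a \rangle_h \langle b \rangle_g \subseteq P_{hg}$ and $\langle a \rangle_h \langle c \rangle_g \subseteq P_{hg}$, whereupon graded primeness combined with $a \in \langle a \rangle_h \setminus P_h$ forces $\langle b \rangle_g, \langle c \rangle_g \subseteq P_g$, so $b, c \in P_g$. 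The main obstacle is precisely this lifting step: elements of $\langle a \rangle_h$ extend beyond $\{a\}$ to include things of the form $na$, $(n + a)m - nm$, and their normal-subgroup combinations, and verifying that each of these still sends $\langle b \rangle_g$ into $P_{hg}$ requires careful use of the single distributive law together with the ideal absorption properties. A natural fallback, should the direct lift prove awkward, is to bypass Theorem 2.8 by arguing the contrapositive of $(1)$ via the homogeneous-element characterization: if $b \notin P_g$ (or $c \notin P_g$) and $a \notin P_h$, construct a specific element of $\langle b \rangle_g$ (respectively $\langle c \rangle_g$) witnessing $ab \notin P_{hg}$, contradicting the hypothesis.
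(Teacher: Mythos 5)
Your cycle $(3)\Rightarrow(1)\Rightarrow(2)\Rightarrow(3)$ is exactly the cycle the paper proves (written there as $(1)\Rightarrow(2)\Rightarrow(3)\Rightarrow(1)$), and your arguments for $(2)\Rightarrow(3)$ and $(1)\Rightarrow(2)$ are essentially the paper's. Two concrete problems remain. First, in $(1)\Rightarrow(2)$ your reduction to homogeneous representatives rests on the claim that $x_g\in\langle x\rangle$ and hence $\langle x_g\rangle\subseteq\langle x\rangle$. That is false in general: the ideal generated by a non-homogeneous element need not contain the homogeneous components of that element (it need not be a graded ideal at all). The paper sidesteps this by silently reading condition (2) with $x$ homogeneous of degree $g$ (its proof opens with ``$x\in N_g$ but $x\notin P_g$''), in which case $x=x_g$ and nothing needs to be done. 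You should adopt that reading; as written, your step is one that fails, not a routine verification.

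Second, the ``lifting step'' you flag in $(3)\Rightarrow(1)$ --- passing from $a\,\langle b\rangle_g\subseteq P_{hg}$ to $\langle a\rangle_h\,\langle b\rangle_g\subseteq P_{hg}$ --- is precisely what the paper asserts in one line with no justification, so you have correctly located the crux, but neither you nor the paper closes it. Your worry is well founded: writing $S=\langle b\rangle_g$ and $T=\{t\in N: tS\subseteq P_{hg}\}$, one checks using right distributivity, normality of $(P,+)$, and the ideal axiom $n(n'+p)-nn'\in P$ that $T$ is closed under sums, negatives, conjugates $n+t-n$, and the quasi-products $n(n'+t)-nn'$; but closure under right multiplication fails, since $(tn)s=t(ns)$ and $ns$ need not lie in $S$, so $T$ need not contain all of $\langle a\rangle$. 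Your proposed fallback is not carried out either, so $(3)\Rightarrow(1)$ remains open in your write-up exactly where it is unjustified in the paper's. To complete it you would need either to control the specific elements of $\langle a\rangle_h$ that arise, or to find a genuinely different argument for this implication.
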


\begin{proof}
$ (1)  \Rightarrow  (2) :\ $ Let\ $t \in N_h\ $ and\ $t \in (P_{hg}
: < x >_g +< y >_g)\ $ for\ any\ $y$ and $x \in N_g $ but\ $x\
\notin\ P_g .\ $ Therefore,\ $t (< x >_ g\ +\ < y >_g)\ \subseteq\
P_{hg}.\ $ Thus, by\ hypothesis, \ $t \in  P_h\ $.

$ (2) \Rightarrow (3) :\ $ Let\ $A$ and $B$\  be\ graded \ ideals\
of\ $N$\ such\ that\ $A_h B_g \subseteq P_{hg} .\ $ Suppose\ that $\
A_h\ \nsubseteq\ P_h\ $ and\ $B_g\ \nsubseteq\ P_g $,\ then there
exists $ b \in B_g\ $ with\ $b \notin P_g .$ \ Now\ we\ claim\ that\
$A_h B_g =0.\ $ Let\ $b_1 \in B_g ,\ $ then $A_h (< b >_g +< b_1
>_g) \subseteq P_{hg},$\ and then $ A_h \subseteq (P_{hg}:< b >_h +<
b_1
>_h)= P_h,$\ which is a contradiction.  Hence,\ $P$\ is\ graded\
prime.

$ (3) \Rightarrow\ (1) :  $ If $ a (< b >_g+< c >_g)\ \subseteq\
P_{hg}$, then
 $ <a>_h (< b >_g+< c >_g)\ \subseteq\ P_{hg}$. Hence, $<a>_h \subseteq P_h$ or
  $(< b >_g+< c >_g) \subseteq P_h$ since P is graded prime. Therefore, $a \in P_h$  and
   $ b,c \in P_g$, since $a \in N_h$  and  $ b,c \in N_g$.

\end{proof}

Next, we use the properties of graded prime ideals to study some
properties of some special graded near-rings such as graded quotient
near-rings, graded near-rings with zero ideal is graded prime and
the product of two graded near-rings.
\begin{thm}
Let $G$ be a multiplicatively monoid with identity element and let
$N$ and $M$ be two $G$-graded near-rings.
 Let $\Phi $ be a surjective homomorphism such that $\Phi (I_g) = (\Phi(I))_g $ for any ideal $I \in N$ and any $g \in G$. Then:
\begin{enumerate}
    \item[(i).] The pre-image of graded prime ideal is graded prime.
    \item[(ii).] The image of graded prime ideal which contains the kernal of $\Phi $ is  graded prime.
\end{enumerate}
\end{thm}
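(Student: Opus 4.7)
The plan is to handle both parts by transporting the defining inclusion for graded primeness through $\Phi$ in the appropriate direction, using the compatibility $\Phi(I_g)=(\Phi(I))_g$ to track the grading at each step.

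For (i), let $Q$ be a graded prime ideal of $M$ and set $P=\Phi^{-1}(Q)$. I would first check that $P$ is a proper graded ideal: it is an ideal as the pre-image of an ideal under a homomorphism, it is proper because $\Phi$ is surjective and $Q\neq M$, and its homogeneous components are $P_g=P\cap N_g$. Then, given ideals $A,B$ of $N$ with $A_gB_h\subseteq P_{gh}$, I would apply $\Phi$ and use the hypothesis to obtain $(\Phi(A))_g(\Phi(B))_h=\Phi(A_g)\Phi(B_h)\subseteq \Phi(P_{gh})\subseteq Q_{gh}$. Graded primeness of $Q$ forces $\Phi(A_g)\subseteq Q_g$ or $\Phi(B_h)\subseteq Q_h$; since elements of $A_g$ already lie in $N_g$, pulling back and intersecting yields $A_g\subseteq P_g$ or $B_h\subseteq P_h$.

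For (ii), let $P$ be a graded prime ideal of $N$ containing $\ker\Phi$, and set $Q=\Phi(P)$. Here I would again first verify that $Q$ is a proper graded ideal: surjectivity of $\Phi$ makes the image an ideal, the assumption $\ker\Phi\subseteq P$ prevents $\Phi(P)=M$ by the standard argument that $\Phi(P)=M$ would give $P+\ker\Phi=N$ and hence $P=N$, and the hypothesis $\Phi(P_g)=(\Phi(P))_g$ gives the graded decomposition of $Q$. For primeness, given ideals $C,D$ of $M$ with $C_gD_h\subseteq Q_{gh}$, I would set $A=\Phi^{-1}(C)$, $B=\Phi^{-1}(D)$ and prove $A_gB_h\subseteq P_{gh}$. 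For any $a\in A_g$ and $b\in B_h$ one has $\Phi(ab)\in C_gD_h\subseteq Q_{gh}=\Phi(P_{gh})$, so $ab$ differs from some element of $P_{gh}$ by an element of $\ker\Phi\subseteq P$; combined with $ab\in N_gN_h\subseteq N_{gh}$ this places $ab$ in $P_{gh}$. Graded primeness of $P$ then gives $A_g\subseteq P_g$ or $B_h\subseteq P_h$, and applying $\Phi$ via the hypothesis transports this to $C_g\subseteq Q_g$ or $D_h\subseteq Q_h$.

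The main obstacle I anticipate is purely bookkeeping: making sure the grading is respected when we move back and forth through $\Phi$. Specifically, in part (i) we need to justify that a containment $\Phi(A_g)\subseteq Q_g$ implies the homogeneous-degree-$g$ containment $A_g\subseteq P_g$ (which relies on $A_g\subseteq N_g$), and in part (ii) the lifting step is where the condition $\ker\Phi\subseteq P$ is essential — without it we would only recover $ab\in P+\ker\Phi$, which need not equal $P$, and we could not conclude $ab\in P_{gh}$. Beyond that the argument is a direct application of the graded primeness of $Q$ or $P$ together with the compatibility hypothesis $\Phi(I_g)=(\Phi(I))_g$.
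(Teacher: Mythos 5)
Your proposal is correct and follows essentially the same route as the paper's own proof: in (i) you push the inclusion $A_gB_h\subseteq(\Phi^{-1}(Q))_{gh}$ forward through $\Phi$ and apply graded primeness in $M$, and in (ii) you pull the ideals back, use $\ker\Phi\subseteq P$ together with $(\Phi^{-1}(C))_g(\Phi^{-1}(D))_h\subseteq N_{gh}$ to land in $P_{gh}$, and then transport the conclusion back via the compatibility hypothesis. The only difference is that you additionally verify properness and gradedness of the pre-image and image, which the paper leaves implicit.
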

\begin{proof}
$(i)$ Suppose that $A_g B_h\ \subseteq\ (\Phi^{-1}(P))_{gh} $, where
$A$ and $B$ are ideals of $N$,
 and $P$ is graded prime ideal of $M$. Then we have  $(\Phi(A))_g  (\Phi(B))_h \subseteq P_{gh}$.
  Since $P$ is graded prime ideal, $(\Phi(A))_g \subseteq P_g $ or $(\Phi(B))_h \subseteq P_h $.
  Then, $A_g \subseteq (\Phi^{-1}(P))_g$ or $B_h \subseteq (\Phi^{-1}(P))_h$. Hence, $\Phi^{-1}(P)  $ is graded prime ideal.

$(ii)$ Suppose that $A_g B_h\ \subseteq\ (\Phi(P))_{gh} $, where $A$
and $B$ are ideals of $M$ and $P$ is graded prime ideal of $N$. Then
we have $(\Phi^{-1}(A))_g  (\Phi^{-1}(B))_h \subseteq (P_{gh} +\ Ker
\Phi) \subseteq P$. However, $(\Phi^{-1}(A))_g  (\Phi^{-1}(B))_h
\subseteq N_{gh}$ hence $(\Phi^{-1}(A))_g  (\Phi^{-1}(B))_h
\subseteq P_{gh}$. Since $P$ is graded prime ideal,
$(\Phi^{-1}(A))_g \subseteq P_g $ or $(\Phi^{-1}(B))_h \subseteq P_h
$. Then,
 $A_g \subseteq (\Phi(P))_g$ or $B_h \subseteq (\Phi(P))_h$. Hence, $\Phi^{-1}(P)  $ is graded prime ideal.

\end{proof}

\begin{lem} Let $N$ be a graded near-ring. If $h: N \rightarrow  \Bar{N}:= N/I $ is
 a homomorphism with $h(J_g) = h(J)_g,$ then $h^{-1}(J_g) =h^{-1}(J)_g $.
\end{lem}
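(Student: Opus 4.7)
The plan is to establish the two inclusions separately, using the hypothesis in a single crucial step.

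First, for the direction $h^{-1}(J)_g \subseteq h^{-1}(J_g)$, I would argue directly. An element $x$ of $h^{-1}(J)_g$ lies in both $h^{-1}(J)$ and $N_g$. The hypothesis applied with the total ideal $J = N$ gives $h(N_g) = h(N)_g = \bar{N}_g$, which says that $h$ is a graded homomorphism. Hence $h(x)$ lies in $\bar{N}_g$ and in $J$, so $h(x) \in J \cap \bar{N}_g = J_g$, giving $x \in h^{-1}(J_g)$. This part is essentially bookkeeping and will not cause trouble.

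For the reverse inclusion $h^{-1}(J_g) \subseteq h^{-1}(J)_g$, the main idea is to invoke the hypothesis on the ideal $K := h^{-1}(J)$ of $N$. This yields $h(K_g) = h(K)_g$, and since $h$ is surjective we obtain $h(K)_g = J_g$, so $h(K_g) = J_g$. Now take any $x \in h^{-1}(J_g)$; its image $h(x)$ lies in $J_g = h(K_g)$, so there is some $y \in K_g = h^{-1}(J) \cap N_g$ with $h(y) = h(x)$. One then uses that $x - y$ sits in $\ker h \subseteq h^{-1}(J)$, together with the graded decomposition of $\ker h$ (inherited from the fact that $\bar{N} = N/I$ is graded), to replace $x$ by a homogeneous representative in $N_g$ lying in $h^{-1}(J)$, thereby placing $x$ in $h^{-1}(J)_g$.

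The step I expect to be the main obstacle is this last adjustment: justifying that the difference $x - y$ can be absorbed into the $g$-component of $h^{-1}(J)$ without spilling into other degrees. This is where the hypothesis $h(K_g) = h(K)_g$ must be used in its full strength — it is precisely the statement that the $g$-component of $h^{-1}(J)$ is large enough to account for every element of $J_g$ up to the kernel, so that no degree mismatch survives. Once this bookkeeping is carried out carefully (using the grading on $I$ implicit in the hypothesis), both inclusions close and the equality follows.
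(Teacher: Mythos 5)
Your first inclusion is fine, and in fact follows at once from the hypothesis applied to the ideal $K:=h^{-1}(J)$ of $N$: surjectivity gives $h(K_g)=h(K)_g=J_g$, hence $K_g\subseteq h^{-1}(J_g)$. The genuine gap is exactly where you predicted it, and it does not close. Having produced $y\in K_g$ with $h(y)=h(x)$, you know $x-y\in\ker h=I$; but the conclusion you need, $x\in h^{-1}(J)_g=h^{-1}(J)\cap N_g$, is a statement about $x$ itself --- in particular that $x\in N_g$. No ``replacement of $x$ by a homogeneous representative'' can deliver this: $x-y$ is an arbitrary element of $I$, possibly with components in several degrees, in which case $x=y+(x-y)\notin N_g$. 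Concretely, $h^{-1}(J_g)$ always contains all of $I$ (since $0\in J_g$), whereas $h^{-1}(J)_g$ meets $I$ only in $I\cap N_g$; so the asserted equality already forces $I\subseteq N_g$, which is not among the hypotheses. The hypothesis $h(K_g)=h(K)_g$ concerns images only and gives no control over preimages modulo the kernel, so it cannot be ``used in its full strength'' to absorb the degree mismatch.

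For comparison, the paper's proof is a one-line chain of set equalities whose second step, $h^{-1}(J)\cap N_g=h^{-1}\bigl(h(h^{-1}(J)\cap N_g)\bigr)$, silently invokes $h^{-1}(h(S))=S$ and thereby discards $\ker h$ --- precisely the same unjustified absorption you are attempting, only hidden. Your two-inclusion route is more honest about where the difficulty sits, but neither argument resolves it; as stated the lemma needs an additional assumption (for instance $I\subseteq N_g$, or more generally that $I$ is concentrated in the relevant degrees), or else the conclusion should be weakened to $h^{-1}(J_g)=h^{-1}(J)_g+I$.
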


\begin{proof}

$h^{-1}(J)_g =h^{-1}(J)\cap N_g= h^{-1}(h(h^{-1}(J)\cap N_g)) =
h^{-1}(h(h^{-1}(J))\cap (N/I)_g) = h^{-1}(J\cap (N/I)_g) =
h^{-1}(J_g).$
\end{proof}
\begin{thm}
Let $N$ be a  graded near-ring, $P$ be a graded ideal, and $Q$ be a
graded ideal of $N$ with $ Q \subseteq P$. Let $\pi : N \rightarrow
\Bar{N}:= N/Q $ be the canonical epimorphism. Then $P$ is graded
prime if and only if $\pi(P) $ is graded prime.

\end{thm}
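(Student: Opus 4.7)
The plan is to reduce the statement to the previous general theorem about homomorphisms of graded near-rings (Theorem 2.10) applied to the canonical projection $\pi : N \to \bar{N} = N/Q$. Because $Q$ is itself a graded ideal, the quotient $N/Q$ inherits the grading $(N/Q)_g = N_g/Q_g$, so $\pi$ is a surjective graded homomorphism. The first preparatory step is therefore to verify the compatibility condition that Theorem 2.10 requires, namely $\pi(I_g) = (\pi(I))_g$ for every ideal $I$ of $N$. This is a short direct check: the inclusion $\pi(I_g)\subseteq (\pi(I))_g$ is immediate because $\pi(I_g)\subseteq \pi(I)\cap (N/Q)_g$, and the reverse inclusion holds because any homogeneous element of $\pi(I)$ of degree $g$ is represented by an element whose $g$-component already lies in $I_g$. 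Once this is in place, we also note that $\ker\pi = Q$ is itself contained in $P$ by hypothesis.

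For the forward direction, assume $P$ is graded prime. Since $\ker\pi = Q \subseteq P$ and $\pi$ is surjective with the compatibility $\pi(I_g)=(\pi(I))_g$, Theorem 2.10(ii) applies verbatim and yields that $\pi(P)$ is a graded prime ideal of $\bar{N}$.

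For the converse, assume $\pi(P)$ is graded prime. By Theorem 2.10(i), the pre-image $\pi^{-1}(\pi(P))$ is a graded prime ideal of $N$. The final step is to identify this pre-image with $P$. Since $Q = \ker\pi \subseteq P$, a standard argument gives $\pi^{-1}(\pi(P)) = P + Q = P$, so $P$ is graded prime. As a small sanity check on the gradedness level, one can also invoke \lref{} (the preceding Lemma about $h^{-1}(J_g) = h^{-1}(J)_g$) to see that this identification respects the grading component-by-component, confirming that $P_g = \pi^{-1}(\pi(P)_g)$ for every $g$.

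The proof is therefore essentially a packaging of Theorem 2.10. No genuine obstacle is expected; the only delicate point worth spelling out is the verification that $\pi$ satisfies the hypothesis $\pi(I_g)=(\pi(I))_g$, since the whole machinery of Theorem 2.10 and Lemma 2.12 depends on it. Everything else is set-theoretic manipulation exploiting $\ker\pi = Q \subseteq P$.
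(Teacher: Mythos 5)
Your argument reaches the theorem by a genuinely different route from the paper's. The paper does not cite Theorem 2.10 here at all: it gives a direct verification, setting $j_g:=(\pi^{-1}(J))_g$ and $i_h:=(\pi^{-1}(I))_h$, invoking Lemma 2.11 to rewrite these as $\pi^{-1}(J_g)$ and $\pi^{-1}(I_h)$, computing $j_gi_h\subseteq(\pi^{-1}(\pi(P)))_{gh}=(P+Q)_{gh}=P_{gh}$, and then pushing back down; the converse is handled symmetrically. Your packaging via Theorem 2.10(i)--(ii) together with $\ker\pi=Q\subseteq P$ and $\pi^{-1}(\pi(P))=P+Q=P$ is more economical and makes explicit a logical dependence that the paper leaves implicit (it essentially re-proves the computation underlying Theorem 2.10).

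The one genuine soft spot is your ``short direct check'' that $\pi(I_g)=(\pi(I))_g$ for \emph{every} ideal $I$ of $N$. The reverse inclusion as you sketch it --- a degree-$g$ element of $\pi(I)$ ``is represented by an element whose $g$-component already lies in $I_g$'' --- tacitly assumes that the $g$-component of an element of $I$ again belongs to $I$, i.e.\ that $I$ is graded (or that $Q\subseteq I$, so the remaining components can be absorbed into $Q$). For a non-graded ideal with $Q\not\subseteq I$ the identity can fail: take $N=\mathbb{Z}\times\mathbb{Z}$ with zero multiplication, $N_0=\mathbb{Z}\times\{0\}$, $N_1=\{0\}\times\mathbb{Z}=Q$, and $I$ the diagonal; then $\pi(I_0)=\{\bar 0\}$ while $(\pi(I))_0=N/Q$. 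Since the definition of graded prime quantifies over arbitrary ideals $A,B$, this matters precisely in your converse direction, where Theorem 2.10(i) is applied to arbitrary ideals of $N$ and needs $(\pi(A))_g\subseteq\pi(A_g)$. In fairness, the paper's own converse commits the identical elision when it writes $(\pi(j))_g(\pi(i))_h=\pi(j_gi_h)$, so your proof is no worse; but to make the reduction airtight you should either restrict the primeness test to graded ideals or verify the identity only where it is actually available (ideals containing $Q$), which does suffice for the forward direction.
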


\begin{proof}
Let $P$ be a graded prime ideal of $N$. Let $J$ and $I$ be two
ideals of $\Bar{N}$ with $J_g I_h \subseteq (\pi (P))_{gh} $.  Let
$j_g:= (\pi^{-1}(J))_g$ and $i_h:= (\pi^{-1}(I))_h$. Then we have,
by Lemma 2.11, $j_g:= (\pi^{-1}(J_g))$ and
  $i_h:= (\pi^{-1}(I_h))$. So, $j_gi_h = (\pi^{-1}(J_g))(\pi^{-1}(I_h))$ $\subseteq (\pi^{-1}(J_gI_h)) $
  $\subseteq \pi^{-1}((\pi(P))_{gh})$ $= (\pi^{-1}(\pi(P)))_{gh} $ $ = (P+Q)_{gh} = P_{gh}.$
and hence $j_g \subseteq P_g  $ or  $i_h \subseteq P_h  $.
Therefore, $J_g = (\pi(\pi^{-1}(J)))_g$ $= \pi(\pi^{-1}(J)_g) $ $ =
\pi(j_g) \subseteq \pi(P_g ) =  (\pi(P)_g  $\ or\ $I_h  \subseteq
(\pi(P ))_h.$
 Thus, $\pi(P)  $ is graded prime.

 For the other direction, let $ \pi(P)$\ be prime ideal of\ $\Bar{N}.
$ If  $j_g i_h \subseteq P_{gh}, $\ then\ $(\pi(j))_g (\pi(i))_h =
\pi(j_gi_h) \subseteq (\pi (P_{gh})) =(\pi (P))_{gh} .$ And hence\
$(\pi(j))_g\ \subseteq\ (\pi(P))_g$\ or\ $(\pi(i))_h\ \subseteq\
(\pi(P))_h$. Therefore,
 $J_g \subseteq (j + Q)_g = (\pi^{-1}(\pi(j)))_g =\pi^{-1}((\pi(j))_g) \subseteq \pi^{-1}((\pi(P))_g) = (\pi^{-1}(\pi(P)))_g = (P+Q)_g = P_g$ or
 $I_h \subseteq P_h.$

\end{proof}

Next proposition gives a property that could be useful to determine
weather some ideals are graded prime or not. Furthermore, the next
proposition, gives  characteristics of the quotient
 graded near-ring $N/P $, whenever $P$ is graded prime ideal in the graded near-ring $N$.
\begin{prop}
Let $N$ be a graded near-ring and $A$, $B$ be ideals.
 $P$ is graded prime ideal if and only if  $\Bar{A}_g\Bar{B}_h  \neq 0 $ if both
  $\Bar{A}_g$  $\Bar{B}_h  \neq 0 $     in $N/P$.
\end{prop}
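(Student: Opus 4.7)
The plan is to prove both directions by unpacking the definitions and using the key observation that, because $P$ is a graded ideal, the canonical projection $\pi \colon N \to N/P$ respects the grading, so for any ideal $A$ of $N$ and any $g \in G$ we have the equivalence
\[
\bar{A}_g = 0 \text{ in } N/P \iff A_g \subseteq P_g.
\]
I would state and justify this equivalence first (it is just tracking the definition of the induced grading on $N/P$, or equivalently a direct consequence of \lref{} in the spirit of \cref{thm} above on the quotient near-ring), because both implications of the proposition reduce to it together with the definition of graded prime ideal.

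For the forward direction, I would assume $P$ is graded prime and take ideals $A$, $B$ of $N$ with $\bar{A}_g \neq 0$ and $\bar{B}_h \neq 0$ in $N/P$. By the equivalence, this says $A_g \nsubseteq P_g$ and $B_h \nsubseteq P_h$. Applying the contrapositive of the definition of graded prime to the pair $(A_g, B_h)$ immediately gives $A_g B_h \nsubseteq P_{gh}$, which translates back to $\bar{A}_g \bar{B}_h \neq 0$.

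For the converse, I would assume the quotient condition and take any ideals $A$, $B$ of $N$ with $A_g B_h \subseteq P_{gh}$. Then $\bar{A}_g \bar{B}_h = 0$ in $N/P$, so by the contrapositive of the hypothesis either $\bar{A}_g = 0$ or $\bar{B}_h = 0$, and by the equivalence this is exactly $A_g \subseteq P_g$ or $B_h \subseteq P_h$. Hence $P$ is graded prime.

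There is no real obstacle here: the argument is purely definitional once the equivalence $\bar{A}_g = 0 \iff A_g \subseteq P_g$ is in hand. The only point requiring a little care is making sure that $\bar{A}_g$ is interpreted as the $g$-component of $\bar{A}$ in the graded near-ring $N/P$ (rather than as $\overline{A_g}$, though these in fact agree precisely because $P$ is graded). Beyond noting this, the rest is a two-line chase in each direction.
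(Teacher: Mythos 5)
Your proposal is correct and follows essentially the same route as the paper: both directions reduce to the translation $\Bar{A}_g = 0 \iff A_g \subseteq P_g$ together with the definition of graded prime (the paper phrases the forward direction as a contradiction via its Theorem 2.8, which is just the contrapositive you invoke directly). Your explicit isolation of the equivalence $\Bar{A}_g = 0 \iff A_g \subseteq P_g$ is a point the paper leaves implicit, but the substance is identical.
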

\begin{proof}
Let $P$ be a graded prime ideal and $A$, $B$ be  any ideals such
that $\Bar{A}_g \neq 0$ and $\Bar{B}_h \neq 0$ in $N/P $ for some
$g,\ h \in G $.
 Hence, neither $A_g \subseteq P_g$ nor $B_h \subseteq P_h$.
 To show that $\Bar{A}_g \Bar{B}_h \neq 0 $. Suppose that $\Bar{A}_g \Bar{B}_h\ =\ 0 $.
 Hence, $A_g B_h \subseteq P_{gh} $, contradicts Theorem 2.8.

Conversely, Let $A_g B_h \subseteq P_{gh}$, then, $\Bar{A}_g
\Bar{B}_h = 0$ in $N/P $.
 By assumption,  $\Bar{A}_g = 0  $ or $\Bar{B}_h = 0  $  in $N/P $.  Therefore,
   $A_g \subseteq P_g $ or $B_h \subseteq P_h$. Hence, $P$ is graded prime.
\end{proof}

We know that in some near-rings, for instance integral near-rings,
 the zero ideal is prime ideal and we know that these near-rings are called prime
  near-rings. However, we can note that in some graded near-rings,  the Zero ideal
   is graded prime. For example, $I = \ \{0\}$ is graded prime in the graded near-ring
   which defined in Example 2.2. In Next, we study  few interesting properties of such near-rings.
\\
If a graded near-ring $N$ is simple, then there is no proper ideal
unless $\{0\} $.
 Hence,  by definition of graded prime ideal,  $\{0\} $ is graded prime ideal or $N$ is
  a zero  near-ring.  More general results is discussed in the Proposition 2.15.
   Before that we need the following  lemma.

\begin{lem}
Let $N$ be a graded near-ring and let $I$ be a graded ideal of $N$.
Zero is graded prime ideal in the graded near-ring $N/I$ if and only
if the ideal $I$ is a graded prime ideal of $N$.
\end{lem}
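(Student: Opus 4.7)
The plan is to recognize this lemma as essentially a specialization of Theorem 2.12, and then, if desired, supply a short direct argument that makes the logic transparent. Setting $Q = P = I$ in Theorem 2.12 (which is legal since the hypothesis $Q \subseteq P$ becomes trivial), the conclusion reads: $I$ is graded prime in $N$ if and only if $\pi(I)$ is graded prime in $N/I$. Because $\pi(I)$ is exactly the zero ideal of $N/I$, this is precisely the statement to be proved. So at the structural level the proof is a single substitution.

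To make the lemma self-contained, I would still write out the two directions directly, mirroring the proof of Theorem 2.12. For the forward direction, assume $I$ is graded prime in $N$. Take ideals $\bar{A}, \bar{B}$ of $\bar{N} := N/I$ with $\bar{A}_g \bar{B}_h \subseteq \{\bar{0}\}_{gh}$, and pull them back to $A := \pi^{-1}(\bar{A})$ and $B := \pi^{-1}(\bar{B})$, both of which contain $I$. Using Lemma 2.11 to commute pre-image with taking homogeneous components, I would show $A_g B_h \subseteq I_{gh}$; then the graded primeness of $I$ gives $A_g \subseteq I_g$ or $B_h \subseteq I_h$, which, after pushing forward through $\pi$, translates to $\bar{A}_g = 0$ or $\bar{B}_h = 0$. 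For the converse direction, assume $\bar{0}$ is graded prime in $\bar{N}$. Given ideals $A, B$ of $N$ with $A_g B_h \subseteq I_{gh}$, I would push them down to $\bar{A} = \pi(A)$ and $\bar{B} = \pi(B)$, observing $\bar{A}_g \bar{B}_h \subseteq \pi(I_{gh}) = \{\bar{0}\}_{gh}$ using the compatibility $\pi(A_g) = \pi(A)_g$ again from Lemma 2.11. Graded primeness of $\bar{0}$ then yields $\bar{A}_g = 0$ or $\bar{B}_h = 0$, i.e., $A_g \subseteq I_g$ or $B_h \subseteq I_h$.

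The only nontrivial step, and the one most likely to cause a technical snag, is the book-keeping with the canonical projection $\pi$: specifically, verifying that $\pi$ respects homogeneous components in both directions, which is exactly what Lemma 2.11 and the assumption ``$h(J_g) = h(J)_g$'' there provide. Once that compatibility is in hand, the rest is a mechanical unwinding of the definition of graded prime, and no further subtlety with the multiplicative monoid $G$ or the non-commutative near-ring structure arises.
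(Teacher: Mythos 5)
Your proposal is correct and takes exactly the same route as the paper, whose entire proof is the observation that setting $Q = P = I$ in Theorem 2.12 makes $\pi(I)$ the zero ideal of $N/I$. The additional direct two-direction argument you sketch is just an unwinding of the proof of Theorem 2.12 and introduces no new ideas or difficulties.
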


\begin{proof} Follows by taking $I = P$ in Theorem 2.12.
\end{proof}

\begin{prop} Let N be a graded near-ring and $I$ be a maximal ideal.
  If $I$ is graded ideal, then either $I$ is graded prime or $N^2 \subseteq I $.
\end{prop}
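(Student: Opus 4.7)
The plan is to pass to the quotient $\bar{N} := N/I$ and exploit the fact that maximality of $I$ makes $\bar{N}$ a simple graded near-ring, i.e.\ it has no proper nonzero ideals. By Lemma 2.14, ``$I$ is graded prime'' becomes ``$\{0\}$ is graded prime in $\bar{N}$'', and ``$N^2 \subseteq I$'' becomes ``$\bar{N}^2 = 0$'', so the task reduces to proving that in any simple graded near-ring, either $\{0\}$ is graded prime or $\bar{N}^2 = 0$.

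First I would argue by contradiction, assuming neither alternative holds. The failure of graded primeness at $\{0\}$, together with Definition 2.1, supplies ideals $\bar{A}, \bar{B}$ of $\bar{N}$ and degrees $g,h \in G$ with $\bar{A}_g \bar{B}_h = 0$ but $\bar{A}_g \neq 0 \neq \bar{B}_h$. Since $\bar{A}, \bar{B}$ are then nonzero ideals of the simple near-ring $\bar{N}$, we have $\bar{A} = \bar{B} = \bar{N}$, and therefore $\bar{N}_g \bar{N}_h = 0$ with both $\bar{N}_g$ and $\bar{N}_h$ nonzero.

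The decisive step is to amplify this single-component annihilation to $\bar{N}^2 = 0$. My plan is to introduce the left annihilator $L := \{x \in \bar{N} : x \bar{N}_h = 0\}$, verify that $L$ is an ideal of $\bar{N}$ by direct use of right distributivity, associativity, and the near-ring ideal identity $n(m+\ell) - nm \in L$ for $\ell \in L$, and note that $\bar{N}_g \subseteq L$ is nonzero. Simplicity then forces $L = \bar{N}$, so $\bar{N}\cdot\bar{N}_h = 0$. A symmetric annihilator argument starting now from $\bar{N}_h$ bootstraps this to $\bar{N}\cdot\bar{N} = 0$, which lifts back to $N^2 \subseteq I$ and contradicts the assumption.

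The main obstacle is precisely this annihilator bookkeeping. Because left distributivity fails in a near-ring, the right annihilator $\{y : \bar{N} y = 0\}$ is not automatically even closed under addition, and right-absorption for $L$ must be extracted from the specific near-ring ideal axioms rather than read off formally. If the verification goes through cleanly the proof is short; otherwise one should flag whatever additional hypothesis on ideals is being implicitly invoked, since the paper's notion of ideal in a near-ring is not made fully explicit in the introduction.
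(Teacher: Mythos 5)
Your overall route is the same as the paper's: pass to $\bar{N}=N/I$, use maximality to make $\bar{N}$ simple, and translate via Lemma 2.14, so that everything reduces to the dichotomy ``in a simple graded near-ring, $\{0\}$ is graded prime or $\bar{N}^2=0$.'' The paper asserts this dichotomy in one line (in the remark preceding Lemma 2.14), and its proof of Proposition 2.15 consists of nothing more than that assertion plus Lemma 2.14. You have correctly isolated the only nontrivial point, which the paper skips entirely: the failure of graded primeness of $\{0\}$ only produces ideals $\bar{A},\bar{B}$ with $\bar{A}_g\bar{B}_h=0$ for a \emph{single} pair $(g,h)$, whence by simplicity $\bar{N}_g\bar{N}_h=0$ for that one pair; one must still amplify this to $\bar{N}^2=0$.

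The problem is that your amplification step does not go through, and in fact cannot. Concretely, $L=\{x\in\bar{N}: x\bar{N}_h=0\}$ is an additive normal subgroup (right distributivity gives $(x_1+x_2)s=x_1s+x_2s$ and $(n+x-n)s=0$) and satisfies the left-ideal condition $n(m+x)-nm\in L$, but the right-ideal condition $LN\subseteq L$ requires $(xn)s=x(ns)=0$, and $ns$ need not lie in $\bar{N}_h$, so $L$ need not be an ideal; your symmetric second step is worse, since the right annihilator $\{y:\bar{N}y=0\}$ need not even be closed under addition, exactly as you feared. Moreover no repair is possible without extra hypotheses: take $\bar{N}=M_2(k)$ with its standard $\mathbb{Z}$-grading $\bar{N}_{j-i}=k\,e_{ij}$. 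This is a simple (near-)ring with $\bar{N}_1\bar{N}_1=0$ and $\bar{N}_1\neq 0$, yet $\bar{N}^2=\bar{N}\neq 0$; so $\{0\}$ is a maximal graded ideal that is neither graded prime (test $A=B=\bar{N}$ in Definition 2.1) nor satisfies $\bar{N}^2\subseteq\{0\}$. Thus the gap you flagged is genuine in the statement and proof as written: either Definition 2.1 is meant to quantify only over \emph{proper} ideals $A,B$ (in which case the simple case is vacuous and the dichotomy is trivially true but contentless), or Proposition 2.15 is false as stated. Your instinct to ``flag whatever additional hypothesis is being implicitly invoked'' was the right one; the answer is that the paper invokes none and this step is simply unjustified.
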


\begin{proof} Let $N$ be a graded near-ring and  $I$ be a maximal ideal which can be graded.
$N/I $ is simple near-ring. Hence, $ \{0\}$ is graded prime in $N/I
$  or $N/I$ is a zero near-ring, which implies that either $I$ is
graded prime ideal by Lemma 2.14, or $N^2 \subseteq I$.
\end{proof}

\begin{cor} Let $N$ be a graded near-ring with unity and let $I$ be a graded ideal.
      If $I$ is a maximal ideal in $N$, then $I$ is a  graded prime ideal.
\end{cor}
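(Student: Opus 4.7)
The plan is to derive this corollary directly from Proposition 2.15, using the existence of a multiplicative identity to eliminate the unwanted alternative. Proposition 2.15 gives a dichotomy for a graded maximal ideal $I$: either $I$ is graded prime, or $N^2 \subseteq I$. So the entire task reduces to showing that the second option is impossible when $N$ has a unity.

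First I would invoke Proposition 2.15 to obtain the dichotomy for $I$. Then I would assume, for contradiction, that $N^2 \subseteq I$. Since $N$ has a multiplicative identity $1$, for every $n \in N$ we can write $n = 1 \cdot n \in N \cdot N = N^2$, so $N \subseteq N^2 \subseteq I$. This forces $N = I$, contradicting the assumption that $I$ is a (proper) maximal ideal. Hence the second alternative of Proposition 2.15 cannot occur, and we conclude that $I$ is graded prime.

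The only point that requires any care is the justification that $N \subseteq N^2$ in a near-ring with unity; this is essentially immediate from $n = 1 \cdot n$, and it is the step that makes the hypothesis \emph{with unity} indispensable. I would not expect any real obstacle here: once Proposition 2.15 is in hand, the corollary is a one-line consequence of the presence of $1 \in N$.
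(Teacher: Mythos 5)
Your argument is correct and is essentially the same as the paper's: both invoke Proposition 2.15 and rule out the alternative $N^2 \subseteq I$ by writing each $n \in N$ as a product with the unity, forcing $N = N^2 \nsubseteq I$ since $I$ is proper. The only cosmetic difference is that the paper writes $n = n\cdot 1$ where you write $n = 1\cdot n$; either suffices.
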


\begin{proof}
Let $I$ be a maximal ideal. If $1$ is a unity of $N$, then for any
$n \in N$, $ n\ =\ n.1 \in N^2 $. Hence, $N^2 = N \nsubseteq I $. By
previous theorem,  $I$ is graded prime.
\end{proof}

\begin{prop} Let $G$ be a multiplicatively monoid with identity element and let $N$ and
   $M$ be two $G$-graded near rings. Let $\Phi $ be a surjective homomorphism from $N$ into $M$,
   such that $\Phi (I_g) = (\Phi(I))_g $ for any ideal $I \in N$ and any $g \in G$. If $\{0\}$ is
   graded prime ideal in $M$, then the kernel is graded prime in $N$.
\end{prop}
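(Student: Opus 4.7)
The plan is to recognize this statement as an immediate specialization of Theorem 2.10(i). The hypothesis says $\{0\}$ is a graded prime ideal of $M$, and the pre-image $\Phi^{-1}(\{0\})$ is by definition $\ker \Phi$. Since $\Phi$ is a surjective homomorphism satisfying the grading condition $\Phi(I_g) = (\Phi(I))_g$ for every ideal $I$ of $N$, Theorem 2.10(i) applies verbatim and delivers that $\ker \Phi$ is graded prime in $N$. So the proof essentially amounts to invoking the previous theorem with the specific choice $P = \{0\}$ and writing out that $\Phi^{-1}(\{0\}) = \ker \Phi$.

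An alternative, slightly more conceptual route would pass through Lemma 2.14. Since $\Phi$ is surjective, the first isomorphism theorem for near-rings gives an isomorphism $\bar{\Phi}\colon N/\ker\Phi \to M$, and the grading compatibility $\Phi(I_g) = (\Phi(I))_g$ ensures that $\bar{\Phi}$ is an isomorphism of $G$-graded near-rings. Transporting the assumption that $\{0\}$ is graded prime in $M$ across this isomorphism yields that $\{0\}$ is graded prime in the quotient $N/\ker\Phi$, and Lemma 2.14 then concludes that $\ker\Phi$ itself is graded prime in $N$. This mirrors the structure used in the proof of Theorem 2.10(i) and would be my fallback if one wanted a self-contained argument rather than a citation.

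The main obstacle, such as it is, lies entirely in the bookkeeping needed to ensure that the graded components interact correctly with $\Phi$; but this is exactly the content of the standing assumption $\Phi(I_g) = (\Phi(I))_g$, which was already used to establish Theorem 2.10. Hence the statement really is a corollary rather than an independent result, and the write-up should be short: a single invocation of Theorem 2.10(i) with $P=\{0\}$, together with the identification $\Phi^{-1}(\{0\}) = \ker\Phi$.
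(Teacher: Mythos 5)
Your proposal matches the paper exactly: the paper's proof is the one-line citation ``Follows by Theorem 2.10,'' and your primary argument simply spells out that citation by taking $P=\{0\}$ in Theorem 2.10(i) and identifying $\Phi^{-1}(\{0\})$ with $\ker\Phi$. The alternative route through Lemma 2.14 is a fine elaboration but not needed; the main argument is correct and is the same as the paper's.
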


\begin{proof}
Follows by Theorem 2.10.
\end{proof}
Note that in the previous proposition, it is not necessary  that $\{
0\}  $
 is graded prime ideal in $N$ if $\{0\}$ is graded prime in $M$.  For example, if
   $\ (G=\{0,1\},+)$\ where\ +\ defined\ as\ $0+0=0,\ 0+1=1,\ 1+0=1\ $ and\ $1+1=1,$\
    then  $M = \mathbb{Z}_2$ and $N = \mathbb{Z}_8$ both are $G$-graded near rings, where,
     $M_0= \mathbb{Z}_2,\ M_1\ = \{0\}$ and $N_0= \mathbb{Z}_8,\ N_1\ = \{0\}$.
     $\Phi:\mathbb{Z}_8 \rightarrow \mathbb{Z}_2,\ \Phi(x)= x $ satisfies the mentioned
     conditions in the previous proposition. However, $\{0\} $ is not graded prime in $\mathbb{Z}_8$
          although it is graded prime ideal in $\mathbb{Z}_2$.   We can also note that  it is
          not necessary  that $\{ 0\}  $ is graded prime ideal in $M$ if $\{0\}$ is graded prime in $N$.
           For example, Let G be defined as Example 2.2, and let N = $\mathbb{Z} $ and
           $M = \mathbb{Z}_8 $ with   $N_0= \mathbb{Z}_,\ N_1\ = \{0\}$ and $M_0= \mathbb{Z}_8,\ M_1\ = \{0\}$.
 Then we have $\{0\} $ is graded prime ideal in $\mathbb{Z}$, but it is not graded prime in $\mathbb{Z}_8$.
  Although, $\Phi (x) = x $, is surjective homomorphism from $\mathbb{Z}$ into
  $\mathbb{Z}_8$.\\

  Let $G$ be a multiplicatively monoid with identity. Let  $(N_1,+_1,*_1)$ and $(N_2,+_2,*_2)$ both
   be $G$-graded near-rings. Define $ N  =  N_1 \times N_2 = \{(n_1,n_2):\ n_1 \in N_1,\ n_2 \in N_2\}$.
   The Direct Product is the set $N$ paired with the operations  addition $+$ and multiplication $*$
   defined as $(n_1^\prime\ ,\ n_2^\prime)\ +\ (n_1^\prime{} ^\prime\ ,\ n_2^\prime{} ^\prime)$ $= $
   $(n_1^\prime +_1 n_1^\prime{} ^\prime\ ,\ n_2^\prime +_2  n_2^\prime {}^\prime)$ and
   $(n_1^\prime\ ,\ n_2^\prime)\ *\ (n_1^\prime{} ^\prime\ ,\ n_2^\prime{} ^\prime)$ $= $
   $(n_1^\prime *_1 n_1^\prime{} ^\prime\ ,\ n_2^\prime *_2  n_2^\prime{} ^\prime)$, for
   each $(n_1^\prime\ ,\ n_2^\prime),\ (n_1^\prime {}^\prime\ ,\ n_2^\prime{} ^\prime) \in N$.
   $N$ is graded near-ring by $N_g = (N_1)_g \times (N_2)_g $.
  Since it is known that if $ (N_1)_g $ and $(N_2)_g $ are normal subgroups of $N_1 $ and $ N_2$,
  respectively, then $N_g = (N_1)_g \times (N_2)_g $ is normal subgroup of $N$.
  Also for any $n = (n_1 ,n_2) \in N$ is written uniquely as sum of finite
  elements of $ N_g=\ (N_1)_g \times (N_2)_g  ,\ g \in G$ since $n_1$ and $n_2$
  are written uniquely as sum of finite elements of $  (N_1)_g$ and $(N_2)_g  ,\ g \in G$
  respectively. Furthermore, $ N_g N_h =  [(N_1)_g \times (N_2)_g] [(N_1)h \times (N_2)_h]= ((N_1)_g(N_1)_h) \times ((N_2)_g(N_2)_h)
     \subseteq (N_1)_{gh} \times  (N_2)_{gh}= N_{gh}$
\\

  Note that If $P_1$ and $P_2$ are graded prime ideals in $N_1$ and $N_2 $, respectively, then $P_1 \times P_2 $ need not be graded prime ideal of $N_1 \times
  N_2$. For example, $\{0\} $ is graded prime in $N = \mathbb{Z}_2 $ where
  $G  = \mathbb{Z}_2$ with $ N_0=\{0\}$ and $N_1 = \mathbb{Z}_2 $, while $\{0\} \times \{0\} $ is not
   graded prime in $\mathbb{Z}_2 \times \mathbb{Z}_2$ since $(\mathbb{Z}_2 \times \{0\})_1$ $(\{0\} \times
   \mathbb{Z}_2)_1$ $\subseteq$ $(\{0\} \times \{0\})_1 $  although neither $(\mathbb{Z}_2 \times \{0\})_1$
     $\subseteq$ $(\{0\} \times \{0\})_1 $ nor $(\{0\} \times \mathbb{Z}_2)_1$ $\subseteq$ $(\{0\} \times \{0\})_1 $.
\\

\begin{thm}
Let $N$ and $M$ be any two $G$-graded near-rings with identity and
$P$ be a graded proper ideal of $N$. Then $P$ is graded prime if and
only if $P \times M$ is a graded  prime ideal of $N \times M$
\end{thm}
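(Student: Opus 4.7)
The plan is to reduce the equivalence to Theorem 2.12 by quotienting $N \times M$ by the graded ideal $\{0\} \times M$ and identifying the quotient with $N$, so that $P \times M$ corresponds to $P$.

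First I would verify the routine housekeeping: $\{0\} \times M$ is a graded ideal of $N \times M$ with homogeneous components $\{0\} \times M_g$; $P \times M$ is a graded ideal with homogeneous components $P_g \times M_g$; $P \times M$ is proper because $P$ is proper in $N$; and the inclusion $\{0\} \times M \subseteq P \times M$ is immediate. Next I would exhibit the canonical graded isomorphism $\phi \colon (N \times M)/(\{0\} \times M) \to N$ sending $(n,m) + (\{0\} \times M)$ to $n$. Well-definedness, bijectivity, and the homomorphism property are routine. For the grading, the $g$-homogeneous component of the quotient, namely $((N \times M)_g + \{0\} \times M)/(\{0\} \times M) = (N_g \times M)/(\{0\} \times M)$, is carried bijectively by $\phi$ onto $N_g$, so $\phi$ is a graded isomorphism, and it sends the image of $P \times M$ in the quotient to $P$.

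Finally, I would apply Theorem 2.12 to the nested graded ideals $\{0\} \times M \subseteq P \times M$ of $N \times M$: the theorem asserts that $P \times M$ is graded prime in $N \times M$ if and only if its image under the canonical epimorphism $\pi \colon N \times M \to (N \times M)/(\{0\} \times M)$ is graded prime in the quotient. Since graded primeness transfers through the graded isomorphism $\phi$, which identifies $\pi(P \times M)$ with $P$, the right side is in turn equivalent to $P$ being graded prime in $N$, giving the desired equivalence.

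The main obstacle is the bookkeeping of the grading in the quotient --- confirming that $\phi$ respects the decomposition into homogeneous components and transports $\pi(P \times M)$ correctly onto $P$. Incidentally, the ``with identity'' hypothesis in the statement does not appear to be needed in this route; once the identification with $N$ is made, the conclusion is essentially an immediate corollary of Theorem 2.12. An alternative direct approach, taking ideals $A, B$ of $N$ with $A_gB_h \subseteq P_{gh}$ and lifting them to $A \times M$ and $B \times M$ in $N \times M$, handles the easy direction cleanly; the reverse direction, however, would require decomposing arbitrary ideals of $N \times M$ as products, which is precisely the point where the quotient argument sidesteps a delicate structural issue.
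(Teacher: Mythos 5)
Your proof is correct, but it takes a genuinely different route from the paper's. The paper argues directly: for the forward direction it takes two ideals of $N\times M$, written in product form $(A\times B)$ and $(C\times D)$, with $(A\times B)_g(C\times D)_h\subseteq (P\times M)_{gh}$, reads off $A_gC_h\subseteq P_{gh}$ in the first coordinate, and applies primeness of $P$; for the converse it lifts ideals $I,J$ of $N$ to $I\times M$ and $J\times M$ --- exactly your ``alternative direct approach.'' Your reduction to Theorem 2.12 via the graded isomorphism $(N\times M)/(\{0\}\times M)\cong N$ is cleaner at precisely the point where the paper is weakest: the paper's forward direction silently assumes that \emph{every} ideal of $N\times M$ decomposes as a product of an ideal of $N$ with an ideal of $M$ (this is where the ``with identity'' hypothesis is implicitly invoked, and it is never justified), whereas your quotient argument quantifies over all ideals of $N\times M$ at once and, as you correctly observe, does not need the identity hypothesis at all. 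The trade-off is that your route inherits the hypotheses of Theorem 2.12 and Lemma 2.11 (in particular that the canonical epimorphism satisfies $\pi(J_g)=\pi(J)_g$) and quietly uses the fact that graded primeness transfers across a graded isomorphism --- routine, and recoverable from Theorem 2.10, but worth stating. Both directions of your argument go through, so this is a valid and arguably tighter proof than the one in the paper.
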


\begin{proof}
Let $P$ be a graded  prime ideal of $N$ and let $(A \times B)$  and
$(C \times D)$ be ideals of $N \times M$ such that $(A \times B)_g(C
\times D)_h \subseteq (P \times M)_{gh} $.\ Then\ $(A_gC_h \times
B_gD_h) \subseteq (P_{gh} \times M_{gh}).$
 So,\ $A_gC_h \subseteq P_{gh}.$  Which\ implies $\
A_g  \subseteq P_g\ $ or\ $C_h \subseteq P_h. \ $ Hence, $(A \times
B)_g \subseteq (P \times M)_g$ or
 $(C \times D)_h \subseteq (P \times M)_{h}$. Hence $P \times M$ is graded prime ideal.

Conversely,\ suppose\ that $\ (P \times M)\ $ is\ a\ graded\  prime\
ideal\ of\ $N \times M \ $ and\ let $I$ and $J$ be\ ideals\ of $N$\
such\ that\ $I_gJ_h \subseteq P_{gh} $.
  Then\ $(I \times M)_g(J \times M)_h \subseteq (P \times M)_{gh}.$  By\
assumption,\ we\ have\ $(I \times M)_g \subseteq (P \times M)_g\ $
or\ $(J \times M)_h \subseteq (P \times M)_h.$\ So,\ $I_g  \subseteq
P_g\ $ or\ $J_g \subseteq P_g.$ Hence, $P$ is graded prime ideal.
\end{proof}

\begin{cor}
Let $N$ and $M$ be two $G$-graded near-rings with identity. If every
proper ideal of $N$ and $M$ is a product of graded  prime ideals,
then every proper ideal of $N \times M$ is a product of graded prime
ideals.
\end{cor}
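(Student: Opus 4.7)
The plan is to use Theorem~2.18 together with the observation that every ideal of the direct product $N \times M$ is itself a direct product of an ideal of $N$ with an ideal of $M$. Given a proper ideal $K$ of $N \times M$, I would first show $K = I \times J$ with $I$ an ideal of $N$ and $J$ an ideal of $M$; by the hypothesis each of $I$ and $J$ (when proper) factors as a product of graded prime ideals, and combining these factorizations via the direct product structure yields the desired factorization of $K$.

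For the splitting step, set $I = \{n \in N : (n,0) \in K\}$ and $J = \{m \in M : (0,m) \in K\}$, both of which inherit the ideal axioms from $K$. The identities $1_N$ and $1_M$ allow componentwise extraction: for $(a,b) \in K$, we have $(a,b)(1_N,0) = (a,0) \in K$ and $(a,b)(0,1_M) = (0,b) \in K$, so $(a,b) = (a,0)+(0,b) \in I \times J$. This gives $K \subseteq I \times J$, and the reverse containment is immediate. Since $K$ is proper, at least one of $I,J$ is proper.

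Next, write $I = P_1 \cdots P_k$ as a product of graded prime ideals in $N$ (interpreting the empty product as $N$ if $I = N$) and similarly $J = Q_1 \cdots Q_l$ in $M$. The presence of identities yields $IN = I$, $MJ = J$, and $MM = M$, whence
\[
(I \times M)(N \times J) = IN \times MJ = I \times J = K,
\]
and $(P_r \times M)(P_{r+1} \times M) = P_r P_{r+1} \times M$. Iterating the latter gives $I \times M = (P_1 \times M) \cdots (P_k \times M)$ and symmetrically $N \times J = (N \times Q_1) \cdots (N \times Q_l)$, so
\[
K = (P_1 \times M) \cdots (P_k \times M)(N \times Q_1) \cdots (N \times Q_l).
\]
By Theorem~2.18 each $P_r \times M$ is graded prime in $N \times M$, and by the obvious symmetric version of Theorem~2.18 (swapping the roles of the two factors) each $N \times Q_s$ is graded prime, completing the argument.

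The main obstacle will be the splitting step. In the near-ring setting only right distributivity holds and the ideal axioms are more delicate than in the ring case, so the verifications that $K = I \times J$ and that $I, J$ are genuine ideals require some care using the definition of an ideal in a near-ring and the identities $1_N, 1_M$. Once the splitting is secured, the remaining steps reduce to a routine combinatorial identity for products of ideals in the product near-ring, together with a direct appeal to Theorem~2.18.
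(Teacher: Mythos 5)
Your factorization argument is the same as the paper's: write $I=A_1\cdots A_n$ and $J=B_1\cdots B_m$, use the identities $(A\times M)(A'\times M)=AA'\times M$ and $I\times J=(I\times M)(N\times J)$, and invoke Theorem~2.18 (and its symmetric version, which the paper also uses tacitly via ``similarly'') to see that each factor is graded prime. Where you genuinely go beyond the paper is the splitting step: the paper simply asserts that every proper ideal of $N\times M$ has one of the three forms $I\times M$, $N\times J$, $I\times J$, whereas you try to prove that every ideal $K$ of $N\times M$ decomposes as $I\times J$. That is the right instinct --- it is the real content hiding behind this corollary (and behind Theorem~2.20) --- and your empty-product convention for the case $I=N$ or $J=M$ is a sensible tidying-up.

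However, your proof of the splitting has a gap specific to near-rings. You compute $(a,b)(1_N,0_M)=(a,0_M)$, but in a near-ring satisfying only right distributivity one has $0\ast y=0$ while $y\ast 0$ need not be $0$; hence $(a,b)(1_N,0_M)=(a\ast 1_N,\,b\ast 0_M)=(a,\,b\ast 0_M)$, and $b\ast 0_M$ can be a nonzero element of $M$ unless $M$ is zero-symmetric. The same problem afflicts $(a,b)(0_N,1_M)$. So componentwise extraction via the idempotents $(1,0)$ and $(0,1)$ does not come for free here; you either need to assume zero-symmetry, or find another route to $(a,0)\in K$ (and similarly $MJ\subseteq J$, which you use to get $MJ=J$, also relies on $n\ast 0=0$ via the left ideal condition $n(0+j)-n0\in J$). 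Since the paper itself never justifies the decomposition at all, your attempt is still a net improvement, but as written the splitting step is not yet a proof in the stated generality.
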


\begin{proof}
Let $I$ be a proper ideals of $N$ and $J$ be a proper ideals of $M$,
Such that $I = A_1\ ...\ A_n\ $ and $\ J\ = \ B_1\ ...\ B_m\ $ where
\ each
 $\ A_i\ $ and $\ B_j$ is graded  prime. If the
proper ideal is of the form $I \times M$, then  $I \times M =
A_1...A_n \times M\ $ can\ be \ written \ as $ \ (A_1\ \times\ N_2)\
...\ (A_n\ \times\ M)$ which is by Theorem 2.18 a product of graded
prime ideals. Similarly, If the proper ideal is of the form $N
\times J$, then it is a product of graded prime ideals. If the
proper ideals is of the form $I \times J$, then it can be written as
$A_1\ ...\ A_n\ \times\ B_1\ ...\ B_m\ = \ (A_1\ ...\ A_n\ \times\
M) \ (N\ \times\ B_1\ ...\ B_m)\ =\ (A_1 \times M)\ ...\ (A_n \times
M)\ (N \times B_1)\ ...\ (N \times B_m).$ Which is a product of
graded  prime ideals.
\end{proof}

\begin{thm}
Let $N$ and $M$ be  two $G$-graded near-rings with unity. Then a
graded ideal $P$ of $N \times M$ with $ (N \times M)_g = N_g \times
M_g,\ \forall g \in G  $
 is graded  prime if and only if it has one of the following two
 forms:
\begin{enumerate}
    \item  $(I \times M)$, \textnormal{where $I$ is a graded  prime ideal of $N$}.

    \item (N $\times$ J), \textnormal{where $J$ is a graded  prime ideal of $M$}.

\end{enumerate}
\end{thm}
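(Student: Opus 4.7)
The backward ($\Leftarrow$) direction is immediate from Theorem 2.18 together with its symmetric analogue (interchange the two factors), which asserts that $N \times J$ is graded prime in $N \times M$ whenever $J$ is graded prime in $M$. The substance of the argument lies in the forward direction: starting from a graded prime ideal $P$ of $N \times M$, I plan to produce a decomposition of $P$ as either $I \times M$ or $N \times J$ by first locating one of the two canonical ideals $N \times \{0\}$, $\{0\} \times M$ inside $P$.

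Set $A := N \times \{0\}$ and $B := \{0\} \times M$. The grading hypothesis $(N \times M)_g = N_g \times M_g$ makes $A, B$ graded ideals of $N \times M$ with $A_g = N_g \times \{0\}$ and $B_g = \{0\} \times M_g$, and the coordinatewise multiplication gives $A_g B_h = \{(0,0)\} \subseteq P_{gh}$ for all $g, h \in G$. Applying the graded prime property pair-by-pair yields, for each $(g,h)$, either $A_g \subseteq P_g$ or $B_h \subseteq P_h$. A quantifier switch then gives a clean dichotomy: if $A_{g_0} \not\subseteq P_{g_0}$ for some $g_0$, then $B_h \subseteq P_h$ must hold for every $h$ and so $B \subseteq P$; otherwise $A_g \subseteq P_g$ for every $g$ and hence $A \subseteq P$. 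Since $P$ is proper, exactly one of these alternatives holds.

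Assume, without loss of generality, that $B \subseteq P$. Define $I := \{n \in N : (n,0) \in P\}$; using the grading compatibility one checks that $I$ is a graded ideal of $N$ with $I_g = \{n \in N_g : (n,0) \in P\}$. The inclusion $B \subseteq P$ combined with the identity $(n,0) = (n,m) - (0,m)$ forces $P = I \times M$ and $P_g = I_g \times M_g$. To conclude that $I$ is graded prime, let $C, D$ be ideals of $N$ with $C_g D_h \subseteq I_{gh}$; then $(C \times M)_g (D \times M)_h = (C_g D_h) \times (M_g M_h) \subseteq I_{gh} \times M_{gh} = P_{gh}$, and graded primeness of $P$ gives $(C \times M)_g \subseteq P_g$ or $(D \times M)_h \subseteq P_h$, whence $C_g \subseteq I_g$ or $D_h \subseteq I_h$. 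The remaining case $A \subseteq P$ is handled by the analogous argument, producing $P = N \times J$ with $J$ graded prime in $M$.

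The only delicate step is the quantifier flip that upgrades the $(g,h)$-dependent disjunction into the uniform statement $A \subseteq P$ or $B \subseteq P$; once this is in place, the remaining bookkeeping (grading of $I$, identification $P = I \times M$, and transfer of graded primality from $P$ to $I$) follows the same template as Theorem 2.18, so I would not anticipate any further obstruction.
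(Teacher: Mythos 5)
Your proof is correct, but the forward direction takes a genuinely different route from the paper's. The paper begins by asserting (without proof, as a known consequence of the unity hypothesis) that every ideal of $N \times M$ already has the form $I \times J$ for ideals $I$ of $N$ and $J$ of $M$, and then uses graded primeness only to rule out the case where both factors are proper, by feeding the product $(\langle a\rangle \times M)_g\,(N \times \{0\})_h$ into the definition for a homogeneous $a \in I_{gh}$. You instead manufacture the product decomposition out of graded primeness itself: the observation that $(N \times \{0\})_g\,(\{0\} \times M)_h \subseteq P_{gh}$ for all $g,h$, followed by your quantifier switch, places one of the two canonical ideals inside $P$, after which the identity $(n,m) = (n,0)+(0,m)$ forces $P = I \times M$ or $P = N \times J$; graded primality of the surviving factor then transfers exactly as in Theorem 2.18. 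Your route is more self-contained --- it does not lean on the unproved decomposition lemma for ideals of a direct product --- and, as a bonus, the unity hypothesis plays no role in your forward direction. One caveat that applies equally to both arguments: the computation $(n,0)(0,m) = (0,0)$ uses $n \cdot 0 = 0$, which for a right near-ring is the zero-symmetry condition rather than a consequence of the stated axioms (only $0 \cdot n = 0$ is automatic); the paper makes the same tacit assumption in Proposition 2.21 and in its own proof of this theorem, so this is an inherited imprecision rather than a defect of your argument, but it is worth flagging if you write this up.
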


\begin{proof} Let $P$ be a proper graded  ideal of $N \times M.$ Then $P$ has one of the following three forms:
  $(I \times M)$  where $I$ is proper ideal of $N$,  $(N \times J)$, where   $J$ is proper ideal of $M$,
   or $I \times J$, where  $I_g\ \neq\ N_g\ $  and $\ J_h\ \neq\ M_h $ for some $g$ and $h$ belongs to $G$.
   \\
   If $P$ is of the form $(I \times M)$ or of the form $(N \times J),$
then, by Theorem 2.18, $P$ is graded  prime ideal if and only if
both $I$ and $J$ are graded prime. Let $P = (I \times J)$ be a
graded prime ideal  with  $I_g\ \neq\ N_g\ $  and
 $\ J_h\ \neq\ M_h $, for some $g$ and $h$ belongs to $G$.
 Suppose $ a \ \in\ I_{gh} $.\ Then\ $(< a >_{gh}\ \times\ \{0\})\ \subseteq\ P_{gh}$ \ This\ implies\
that\ either\ $(< a >\ \times\ M)_g\ \subseteq\ P_g\ $ or\ $(N\
\times\ \{0\})_h\ \subseteq\ P_h$.\ If\ $(< a >\ \times\ M)_g\
\subseteq\ P_g $, \ then\
 $M_g\ =\ J_g$\ and\ if\ $(N\ \times\ \{0\})_h\ \subseteq\ P_h$,\ then\ $N_h\ =\ I_h$.\
 Contradiction.\  Hence $I$ $\times  $ $J$ can not be graded  prime ideal if both $I$ and $J$ are proper ideals.
\end{proof}

Note that, from  previous theorem, we can not find a graded
near-ring in which every
 proper graded ideal is graded prime if the $G$-graded near-ring $N$ with unity and  of the
  form $N_1 \times N_2$, where ($N_1 \times N_2)_g = (N_1)_g \times (N_2)_g$ unless one
   of $N_1$ or $ N_2$ is the Zero near-ring. More general,  we can not find graded near-ring
    with every proper graded ideal is graded prime regardless it has unity or not.  Since the
     ideal $\{0\} \times \{0\}  $ can not be graded prime ideal if both $ N_1$ and $N_2$ are non trivial near-rings.

\begin{prop}
Let $N$ and $M$ be  two non trivial $G$-graded near-rings. Then
$\{0\} \times \{0\}$
 can not be graded prime ideal of $N \times M$ with $ (N \times M)_g = N_g \times M_g,\ \forall g \in G  $.
\end{prop}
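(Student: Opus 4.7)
The plan is to exhibit two graded ideals of $N \times M$ whose homogeneous components multiply into $\{0\} \times \{0\}$ but neither component lies in $\{0\} \times \{0\}$, thereby violating the definition of a graded prime ideal.

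The natural candidates are $A := N \times \{0\}$ and $B := \{0\} \times M$. First I would verify that these are ideals of $N \times M$ (this is routine from the componentwise definitions of $+$ and $*$) and that they respect the grading $(N \times M)_g = N_g \times M_g$, so that their homogeneous components are $A_g = N_g \times \{0\}$ and $B_h = \{0\} \times M_h$. Computing componentwise, for any $g, h \in G$,
\[
A_g B_h = (N_g \times \{0\})(\{0\} \times M_h) = (N_g \cdot \{0\}) \times (\{0\} \cdot M_h) = \{0\} \times \{0\},
\]
which is contained in $(\{0\} \times \{0\})_{gh}$.

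Next, since $N$ is non-trivial there exists $g \in G$ with $N_g \neq \{0\}$, and similarly there exists $h \in G$ with $M_h \neq \{0\}$. For these choices, $A_g = N_g \times \{0\} \neq \{0\} \times \{0\} = (\{0\} \times \{0\})_g$ and $B_h = \{0\} \times M_h \neq \{0\} \times \{0\} = (\{0\} \times \{0\})_h$. Hence neither $A_g \subseteq (\{0\} \times \{0\})_g$ nor $B_h \subseteq (\{0\} \times \{0\})_h$, contradicting graded primeness.

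The only genuine obstacle is the bookkeeping for the homogeneous decomposition: one has to check that $N \times \{0\}$ decomposes as $\bigoplus_g (N_g \times \{0\})$ under the given grading $(N \times M)_g = N_g \times M_g$, and likewise for $\{0\} \times M$. This is immediate from the hypothesis on the product grading, so no substantial difficulty remains; the result then follows at once.
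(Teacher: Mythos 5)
Your argument is correct and is essentially the paper's own proof: both take the two coordinate-axis ideals $N \times \{0\}$ and $\{0\} \times M$, note that their homogeneous components multiply into $\{0\} \times \{0\}$, and choose $g,h$ with $N_g \neq \{0\}$ and $M_h \neq \{0\}$ to contradict graded primeness. If anything your write-up is cleaner (the paper swaps the factors, writing $\{0\}\times N$ and $M\times\{0\}$); the one caveat, shared equally by the published proof, is that the step $N_g\cdot\{0\}=\{0\}$ uses $n*0=0$, which in a right-distributive near-ring is automatic only for $0*n$, so strictly speaking a zero-symmetry assumption is being used silently in both arguments.
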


\begin{proof}
Suppose that $\{0\}\ \times\ \{0\}$ is graded prime ideal for some
$G$-graded near-ring
 $ N\ \times\ M$ with  $ (N\ \times\ M)_g\ =\ N_g\ \times\ M_g,\ \forall g\ \in\ G  $.
 Take $g$ and $h$ belongs to $G$ such that $N_g\ \neq\ \{0\}  $ and $M_h\ \neq\ \{0\} $.
 Since $(\{0\}\ \times\ N)_g (M\ \times\ \{0\})_h\ \subseteq\ (\{0\}\ \times\ \{0\})_{gh}$
 for any $g$ and $h$ belongs to $G$, then $(\{0\}\ \times\ N)_g\ \subseteq\ (\{0\}\ \times\ \{0\})_g$ or
  $ (M\ \times\ \{0\})_h\ \subseteq\ (\{0\}\ \times\ \{0\})_{h}$.
Therefore, $N_g\ =\ \{0\}$ or $M_h\ =\ \{0\}.$ Contradiction and
hence neither $(\{0\}\ \times\ N)_g\ \subseteq\ (\{0\}\ \times\
\{0\})_g$ nor $ (M\ \times\ \{0\})_h \ \subseteq\ (\{0\}\ \times\
\{0\})_{h}$, which implies $\{0\}  \times \{0\}$  can not be graded
prime ideal.
\end{proof}

\begin{cor}
If every graded ideal is graded prime in some $G$-graded near-ring
  $N \times  M$ with  $ (N \times M)_g = N_g \times M_g,\ \forall g \in G  $, then
  every graded ideal is graded prime of one of $N$ and $M$ while the other is the trivial near-ring.
\end{cor}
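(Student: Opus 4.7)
The plan is to deduce the corollary as an almost immediate consequence of Proposition 2.21. The first step is to observe that $\{0\} \times \{0\}$ is always a graded ideal of $N \times M$ (its homogeneous components are $\{0\} \times \{0\}$ for each $g \in G$), so the hypothesis forces it to be graded prime. Proposition 2.21 asserts that, under the stated grading condition $(N \times M)_g = N_g \times M_g$, this is impossible whenever both $N$ and $M$ are non-trivial. Hence at least one of $N$ and $M$ must be the trivial (zero) near-ring. Without loss of generality, assume $M = \{0\}$.

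Next I would transfer the hypothesis from $N \times M$ to $N$. With $M = \{0\}$, the projection $\pi : N \times M \to N$ onto the first coordinate is a near-ring isomorphism which is compatible with the grading, since $(N \times M)_g = N_g \times \{0\}$ corresponds to $N_g$. Under this correspondence, every graded ideal $I$ of $N$ matches a graded ideal $I \times \{0\}$ of $N \times M$, and the grading-respecting bijection on graded ideals makes the graded prime condition transparent.

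It then remains to verify that graded primality is preserved by this correspondence. Given a graded ideal $I$ of $N$, suppose $A_g B_h \subseteq I_{gh}$ for graded ideals $A, B$ of $N$. Then $(A \times \{0\})_g (B \times \{0\})_h = (A_g B_h) \times \{0\} \subseteq (I \times \{0\})_{gh}$. Since $I \times \{0\}$ is graded prime in $N \times M$ by hypothesis, either $(A \times \{0\})_g \subseteq (I \times \{0\})_g$ or $(B \times \{0\})_h \subseteq (I \times \{0\})_h$, which translates directly into $A_g \subseteq I_g$ or $B_h \subseteq I_h$. Hence every graded ideal of $N$ is graded prime, completing the proof.

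The argument is essentially a bookkeeping exercise built on Proposition 2.21; there is no real obstacle. The only point requiring a moment of care is ensuring that the correspondence between graded ideals of $N \times \{0\}$ and graded ideals of $N$ is both bijective and grading-preserving, so that the transfer of the graded prime condition goes through cleanly in both directions.
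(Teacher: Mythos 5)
Your proof is correct and its core step is the same as the paper's: apply Proposition 2.21 to the graded ideal $\{0\}\times\{0\}$ to conclude that both factors cannot be non-trivial. Where you go beyond the paper is in the second half: the paper's proof stops at the contradiction and never addresses the clause that every graded ideal of the remaining factor is graded prime, whereas you supply the missing transfer argument, identifying $N\times\{0\}$ with $N$ via the grading-compatible projection and checking that $A_gB_h\subseteq I_{gh}$ in $N$ is equivalent to $(A\times\{0\})_g(B\times\{0\})_h\subseteq (I\times\{0\})_{gh}$ in $N\times\{0\}$. That extra bookkeeping is genuinely needed for the statement as written, so your version is the more complete proof; the only cosmetic quibble is that Definition 2.1 quantifies over arbitrary ideals $A,B$ rather than only graded ones, but your argument works verbatim for arbitrary ideals since $A_g=A\cap N_g$ is defined for any ideal.
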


\begin{proof}
Suppose that every graded ideal is graded prime in some  $N \times
M$ with neither
  $N$ nor $M$ is Zero near-ring. Hence, $\{0 \} \ \times\ \{0\}$ is graded prime which contradicts previous proposition.
\end{proof}

\bibliographystyle{amsplain}

\end{document}